\let\over\@@over\makeatother
\newcommand{\be}{\begin{equation} }
\newcommand{\ee}{\end{equation}}
\newcommand{\bse}{\begin{subequations}}
\newcommand{\ese}{\end{subequations}}
\numberwithin{equation}{section}
\theoremstyle{plain} 
\newtheorem{theorem}{Theorem}[section] 
\newtheorem{proposition}{Proposition}[section]
\newtheorem{lem}{Lemma}[section]
\theoremstyle{remark}
\newtheorem{remark}{Remark}[section]
\newcommand{\LB}{\left[}
\newcommand{\RB}{\right]}
\newcommand{\LC}{\left(}
\newcommand{\RC}{\right)}
\newcommand{\LA}{\left<}
\newcommand{\RA}{\right>}
\newcommand{\R}{\mathbb{R}}
\newcommand{\abs}[2][]{#1\lvert #2 #1\rvert}
\title[Transverse instability of CH-KP-I]{Transverse instability of the CH-KP-I equation}
\author[R. M. Chen]{Robin Ming Chen} 
\address{Department of Mathematics, University of Pittsburgh, Pittsburgh PA 15260}
\email{mingchen@pitt.edu}
\author[J. Jin]{Jie Jin}
\address{Department of Mathematics, University of Pittsburgh, Pittsburgh PA 15260}
\email{jij50@pitt.edu} 
\date{}
\begin{document}
\maketitle
\begin{abstract}
     The Camassa--Holm--Kadomtsev--Petviashvili-I equation (CH-KP-I) is a two dimensional generalization of the Camassa--Holm equation (CH). In this paper, we prove transverse instability of the line solitary waves under periodic transverse perturbations. The proof is based on the framework of  \cite{Rousset-Tzvetkov Poincare 2009}. Due to the high nonlinearity, our proof requires necessary modification. Specifically, we first establish the linear instability of the line solitary waves. Then through an approximation procedure, we prove that the linear effect actually dominates the nonlinear behavior.
\end{abstract}
\section{Introduction}
Surface water wave is too much of a monster to tame. Thus various asymptotic models have been developed to simplify it. In the realm of shallow water waves, these models include the KdV equation \cite{KortewegdeVries1895}, the Camassa--Holm equation \cite{Camassa-Holm 1993, Constantin Lannes 2009}, etc.. They are all unidirectional approximation models, which means that we assume the surface elevation is uniform in the transverse direction. A key observation is that these models all admit Hamiltonian structure, which indicates that it is reasonable to expect a systematic way to deal with a class of problems based on that structure. One problem focuses on the orbital stability around solitary waves  -- traveling waves which decay to zero at infinity. Roughly speaking, we want to know if the solution consistently stays in the neighborhood of a solitary wave and its translation  when its initial data does. A naive thinking why it is true is that the solitary wave holds the least Lagrangian action energy, so the object around it is ``willing" to evolve like that. One of the universal treatments is by center manifold theory. The center manifold theory is an equivalent but more algebraic form of the original problem (e.g. under Fourier transform), based on spectral decomposition. The ``finite dimension'' version of the spectral decomposition is purely algebraic in taste, while its corresponding ``infinite" counterpart has topology coming into play as a role of approximation to mimic the world of ``finite". This thought works well for some class of operators (e.g. normal operators), but not some others. For equations preserving the Hamiltonian structure, the linearized operator around a solitary wave has essential spectrum on the imaginary axis, which corresponds to center manifold part that is hard to deal with. Another treatment is by the Lyaponov method, which is by Benjamin \cite{Benjamin 1972} and Bona \cite{Bona1975}, and later generalized to handle a class of Hamiltonian models by Weinstein \cite{Weinstein1986} and Grillakis--Shatah--Strauss (GSS) \cite{GSS 1987}. They claim that knowing the information from the Lagrangian action energy allows one to determine the orbital stability and instability. The gain of their method is that instead of working with the original linearized operator, one just needs to study the spectrum of a rather transparent self-adjoint operator. The trade-off is that it is required to carefully weave the domain of the energy functional to balance between the complexity and solvability (due to loss of information from the original problem).

Besides the unidirectional models like KdV and CH, one can also allow transverse effect into modeling, leading to two-dimensional generalizations of the scalar models. Since the transverse perturbation is weak, it is natural to ask whether these models retain transverse stability, i.e. the unidirectional solitary waves remain stable under the two-dimensional flow. However, the answer to this question is much more involved. The first result is by Alexander--Pego--Sachs \cite{Alexander-Pego-Sachs 1997} on the Kadomtsev--Petviashvili (KP) equation
\begin{align*}
    (u_t+uu_x+u_{xxx})_x-\sigma u_{yy}=0
\end{align*}
which is a two-dimensional version of the KdV equation. The coefficient $\sigma$ takes values in $\{-1, 1\}$ representing the strength of capillarity relative to the gravitational forces. The weak surface tension case corresponds to $\sigma = 1$ and is referred to as the KP-I equation; and the strong surface tension leads to the so-called KP-II equation with $\sigma = -1$. In \cite{Alexander-Pego-Sachs 1997}, the authors state that the KP-I model is linearly stable, while the KP-II model is linearly unstable. The transition from linear instability to nonlinear instability for the KP-I equation is achieved by Rousset-Tzvetkov \cite{Rousset-Tzvetkov Poincare 2009}. Later on, they employed the same idea to a large class of equations \cite{Rousset-Tzvetkov JMPA 2008}. Transverse stability of the KP-II equation is proved by Mizumachi-Tzvetkov \cite{MK2012} and Mizumachi \cite{Mizumachi 2015}.

In this paper, we will study the Camassa--Holm--Kadomtsev--Petviashvili-I equation (CH-KP-I), which is a two-dimensional generalization of the Camassa-Holm equation (CH):
\begin{align}\label{CH-KP-I}
\LB\LC 1-\partial_x^2\RC u_t+3uu_x+2\kappa u_x-2u_xu_{xx}-uu_{xxx}\RB_x-u_{yy}=0
\end{align}
with $\kappa>0$.
In \cite{Chen 2006}, Chen derived a generalized version of \eqref{CH-KP-I} in the context of nonlinear elasticity theory. Also in \cite{Johnson 2002}, the CH-KP-II model is derived in the context of water wave. Note that in \eqref{CH-KP-I}, if we disregard the transverse effect, the CH-KP-I equation is reduced to the CH equation. The CH equation exhibits the wave-breaking phenomenon that is not shown in the KdV equation. From the point of view of modeling, this is because that these two models arise from different physical parameter regimes. More specifically, let $h$ and $\lambda$ denote respectively the mean elevation of the water over the bottom and the typical wavelength, and let $a$ be a typical wave amplitude. The parameter regime considered in the CH equation corresponds to 
\begin{equation*}
\varepsilon = {a\over h} \ll 1, \quad \delta = {h \over \lambda} \ll 1, \quad \varepsilon = O(\delta),
\end{equation*}
while the parameter regime for the KdV equation is $\varepsilon = O(\delta^2)$. Physically, $\varepsilon$ measures the strength of nonlinearity and $\delta$ characterizes the effect of dispersion, thus the CH equation possesses stronger nonlinearity than the KdV equation, which allows for the breaking wave. Like the KdV equation, solitary waves also exist for the CH equation, which are symmetric, monotone decreasing on positive $x$-axis and decay exponentially as $\abs{x}\rightarrow \infty$. Furthermore, the CH solitary waves are also orbitally stable like the KdV solitons, as is proved by Constantin--Strauss \cite{Constantin Strauss 2002} using the GSS method. For the CH-KP-I equation, since it could be treated as the CH counterpart of the two-dimensional
KdV equation (KP-I), it is reasonable to expect that the CH line solitary waves are also transversely unstable. Here a line solitary wave $\phi$ is defined such that it is uniform in the transverse direction, and for each cross section, it is exactly the solitary wave of the CH equation. The theorem we prove is as follows:
\begin{theorem}[Transverse instability of line solitary waves]\label{main theorem}
The CH line solitary wave $\phi$ of the CH-KP-I equation \eqref{CH-KP-I} is transversely unstable in the following sense: There exists $k_0>0$ such that  for every $s\geq 0$, there exists an  $\eta>0$ such that  for each $\delta>0$, there exists a solution $u^{\delta}$ emanating from an initial datum $u_0^{\delta}\in H^{\infty}(\mathbb{R}\times\mathbb{T}_a)$ with $\left\|u_0^{\delta}-\phi\right\|_{H^s(\R \times \mathbb{T}_a)}\leq\delta$, and a time $T^{\delta}\sim \left|\log\delta\right|$, so that $u^\delta$ satisfies
\begin{align}\label{instability quantity}
\inf_{l\in\mathbb{R}}\left\|u^{\delta}( T^{\delta},\cdot)-\phi(\cdot-l)\right\|_{L^2(\mathbb{R}\times\mathbb{T}_a)}\geq \eta,
\end{align}
where $a=\frac{2\pi}{k_0}$, $\mathbb{T}_a$ is the torus $\mathbb{R}/a\mathbb{Z}$.
\end{theorem}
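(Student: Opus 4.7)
The plan is to execute the two-step Rousset--Tzvetkov program: first establish linear instability of $\phi$ by producing a real growing eigenvalue for the linearization about it, then upgrade linear instability to nonlinear instability through a high-order approximate solution together with a long-time energy estimate. Because the nonlinear part of \eqref{CH-KP-I} contains the quasilinear terms $uu_{xxx}$ and $u_xu_{xx}$, the energy step requires genuine modifications of the original framework and is the principal difficulty.

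For the linear step, I would pass to the traveling frame $\xi = x - ct$ and write the linearization around $\phi$ in Hamiltonian form $\partial_t v = \mathcal{J}\mathcal{L} v$. A transverse Fourier ansatz $v(t,\xi,y) = \operatorname{Re}\bigl(e^{\sigma t + iky} w(\xi)\bigr)$ reduces the eigenvalue problem to a one-parameter family of ordinary differential operators $\mathcal{M}_k$, where the transverse derivative $-u_{yy}$ contributes a destabilizing term of order $k^2 \partial_\xi^{-1}$. The scalar problem at $k=0$ is neutrally stable (Constantin--Strauss), with generalized kernel $\operatorname{span}\{\phi',\partial_c\phi\}$. Perturbation theory around $k=0$ then produces a real eigenvalue branch $\sigma(k) > 0$ on $(0,k_*)$, in parallel with the KP-I analysis of Alexander--Pego--Sachs. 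I would pick $k_0 \in (0,k_*)$ maximizing $\sigma$, set $\sigma_0 := \sigma(k_0) > 0$ with smooth exponentially decaying eigenfunction $W_0$, and fix $a = 2\pi/k_0$ so that $W_0(\xi)e^{ik_0 y}$ is admissible on $\mathbb{R}\times\mathbb{T}_a$.

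With the unstable mode $U_1(\xi,y) = \operatorname{Re}\bigl(W_0(\xi)e^{ik_0 y}\bigr)$ in hand I build an approximate solution of order $N$,
\begin{equation*}
u^{\delta}_{\mathrm{app}}(t,\xi,y) = \phi(\xi) + \sum_{j=1}^{N} \delta^j e^{j\sigma_0 t}\, U_j(\xi,y),
\end{equation*}
where each $U_j$ is obtained iteratively by solving $(j\sigma_0 - \mathcal{M}_{jk_0}) U_j = F_j$ with $F_j$ the polynomial forcing from expanding the nonlinearity of \eqref{CH-KP-I}. Solvability for $j \geq 2$ follows from the maximality of $\sigma_0$: the real part of the spectrum of $\mathcal{M}_{jk_0}$ is bounded above by $\sigma_0 < j\sigma_0$, so $j\sigma_0$ lies in its resolvent set. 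By construction the residual obtained from substituting $u^{\delta}_{\mathrm{app}}$ into \eqref{CH-KP-I} is of order $\delta^{N+1} e^{(N+1)\sigma_0 t}$ in every Sobolev norm.

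The main obstacle is the nonlinear estimate for $z := u^\delta - u^{\delta}_{\mathrm{app}}$ over the long interval $[0, T^\delta]$, $T^\delta := \sigma_0^{-1}\log(\eta/\delta)$. The terms $u u_{xxx}$ and $u_x u_{xx}$ produce a derivative loss that the weak dispersion of CH-KP-I (of order one after inverting $1-\partial_x^2$) cannot absorb directly. I would rewrite \eqref{CH-KP-I} in its equivalent non-local Camassa--Holm form
\begin{equation*}
u_t + u u_x + \partial_x (1-\partial_x^2)^{-1}\bigl[u^2 + \tfrac{1}{2} u_x^2 + 2\kappa u\bigr] = (1-\partial_x^2)^{-1}\partial_x^{-1} u_{yy},
\end{equation*}
so the principal quasilinear structure becomes the transport $\partial_t + u^\delta \partial_x$. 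Using Kato-style symmetrization of the transport term and Kato--Ponce commutator estimates for the nonlocal remainder, I would close
\begin{equation*}
\tfrac{d}{dt}\|z\|_{H^s}^2 \leq C\bigl(1 + \|u^{\delta}_{\mathrm{app}}\|_{W^{1,\infty}}\bigr)\|z\|_{H^s}^2 + C \delta^{2(N+1)} e^{2(N+1)\sigma_0 t}
\end{equation*}
via a bootstrap on the high norm of $z$. Choosing $N$ large relative to $s$ yields $\|z(T^\delta)\|_{L^2} \lesssim \eta^{N+1}$. At $t = T^\delta$ the first transverse Fourier mode of $u^{\delta}_{\mathrm{app}}$ has $L^2$ norm comparable to $\eta \|W_0\|_{L^2}$; since $\phi(\cdot - l)$ is $y$-independent, its first transverse mode vanishes, so Parseval in $y$ forces $\|u^\delta(T^\delta,\cdot) - \phi(\cdot - l)\|_{L^2(\mathbb{R}\times\mathbb{T}_a)} \geq c\eta - C\eta^{N+1}$ uniformly in $l \in \mathbb{R}$, which is \eqref{instability quantity} after shrinking $\eta$.
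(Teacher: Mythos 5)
Your overall architecture matches the paper's: linear instability followed by a Grenier-type high-order approximate solution and a long-time energy estimate, with the observation that writing the CH-KP-I nonlinearity in nonlocal transport form isolates the only genuinely unbounded term $(c-\phi)\partial_x$, and a final Parseval-in-$y$ argument against the zero transverse mode of $\phi(\cdot-l)$. That skeleton is right. But two of the load-bearing steps are underjustified in a way that would not close as written.

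First, the linear instability. You propose to perturb off $k=0$, \`a la Alexander--Pego--Sachs. At $k=0$ the operator $\mathcal{J}\mathcal{L}(0)=-\mathcal{J}H_c$ has $\sigma=0$ as a generalized eigenvalue embedded in its essential spectrum (which sits on the imaginary axis), so there is no spectral gap and analytic perturbation theory does not produce a branch $\sigma(k)$ off $k=0$ without more work. APS bypass this with a concrete Evans function computation exploiting the constant-coefficient structure of the KdV linearization; for CH the leading coefficient $(c-\phi)$ is variable and fourth order, and the paper explicitly remarks that this computation is intractable and therefore recasts the problem as a generalized eigenvalue problem $\sigma\mathcal{J}^*(k)W=\tilde{\mathcal{L}}(k)W$ for the \emph{self-adjoint} family $\tilde{\mathcal{L}}(k)=\mathcal{J}(k)\mathcal{L}(k)\mathcal{J}^*(k)$ and invokes the abstract criterion of Rousset--Tzvetkov (2010), verified via the Constantin--Strauss spectral decomposition of $H_c$. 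Your step ``perturbation theory produces a real eigenvalue branch'' is precisely what the paper had to replace.

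Second, the resolvent/approximate-solution step. Asserting that $j\sigma_0$ lies in the resolvent set of $\mathcal{M}_{jk_0}$ locates the solution; the real difficulty is proving the resolvent bound $|w|_{s}\lesssim |H|_{s-1}$ \emph{uniformly} in the Laplace dual variable $\tau$ for a fixed abscissa $\gamma_0\in(\sigma_0,\gamma)$. After commuting $\partial_x^{s+1}$ through $(c-\phi)\partial_x$ and integrating by parts one is left with $\gamma_0|\partial_x^{s+1}w|_0^2 + (s+\tfrac12)\langle\phi'\,\partial_x^{s+1}w,\partial_x^{s+1}w\rangle + \cdots$; since $\phi'$ changes sign and $(s+\tfrac12)|\phi'|_\infty$ grows with $s$ while $\gamma_0$ is fixed, the quadratic form is not sign-definite for large $s$. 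Kato--Ponce commutator bounds give you the same $(s+\tfrac12)$-order loss with an $s$-dependent constant and cannot restore positivity. The paper handles this by testing against $(-1)^{s+1}\partial_x^{2s+2}w + (-1)^{s+1}\partial_x^{s+1}(r_{s+1}(x)\partial_x^{s+1}w)$ with a bounded weight $r_{s+1}$ chosen to solve a Bernoulli-type ODE that exactly cancels the $-(s+\tfrac12)\phi'$ contribution on the region where $\gamma_0+(s+\tfrac12)\phi'$ could be negative. This cancellation mechanism is the ``necessary modification'' the paper advertises and is absent from your estimate. (Also, your pure-exponential ansatz $\delta^j e^{j\sigma_0 t}U_j$ with zero initial data is inconsistent; the paper instead solves forced linear IVPs for $v^k$ and derives the growth bound $|v^k|\lesssim e^{(k+1)\sigma_0 t}$ from the resolvent estimate via Laplace transform.) The closing Parseval argument and the bootstrap on the error are sound and match the paper.
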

Our proof is based on the pioneering work of Rousset--Tzvetkov \cite{Rousset-Tzvetkov Poincare 2009, Rousset-Tzvetkov JMPA 2008}. Their main idea is to first construct a most unstable eigenmode, and then prove that the nonlinear effect can be dominated by the linear effect, in the spirit of center manifold theory. The method works perfectly well for semilinear equations. However due to the nature of quasilinearity in our equation, we need to make necessary changes. The strategy is as follows: as in \cite{Rousset-Tzvetkov Poincare 2009, Rousset-Tzvetkov JMPA 2008}, the first step is to prove the linear instability by finding one unstable eigenvalue. Our method relies on \cite{Rousset-Tzvetkov 2010}. By taking Fourier transform with respect to $y$, the problem is transformed to finding a positive eigenvalue $\sigma$ corresponding to one frequency $k$. To handle this problem, it suffices to know the distribution of spectrum as $k$ evolves. The key issue is that for each $k$, the spectrum of the corresponding operator is hard to investigate compared with that of the KdV equation. Thus we have turned the problem to a generalized eigenvalue problem for a self-adjoint operator, and the spectrum of self-adjoint operator has much better property.

The second step is to prove the nonlinear instability based on the linear result. First, we choose the most unstable eigenmode $v^0$. Then we will prove that the solution $u^{\delta}=\phi+v^{\delta}$ with initial data $\phi+\delta v^0(0,\cdot)$ could lead to \eqref{instability quantity}. The estimate is based on the approximation procedure first constructed by Grenier \cite{Grenier 2000}. In details, the approximation of $v^{\delta}$ can be written as $v^{ap}=\delta\LC v^0+\sum_{k=1}^{M}\delta^{k}v^k\RC$. Since the nonlinearity of \eqref{CH-KP-I} is power-like, by matching the orders of $\delta$, it turns out that this approximate scheme is iterative. Unlike Picard iteration for the center manifold theory, each $v^k$ in this scheme solves a differential equation. The main reason why we choose this approximation scheme instead of the semigroup estimate is due to the high nonlinearity. For the semigroup estimates, since we couldn't have an explicit form of the semigroup, it is hard to conduct delicate analysis to close the energy estimates because of the loss of derivative. While for Grenier's approach, since for the $j$th iteration, $v^j$ is just a finite combination of the Fourier modes, it allows us to use energy estimates to overcome this difficulty. The rest of the proof consists of two parts. We first estimate $v^k$ and show that it can be controlled by $v^0$. Then an error estimate will follow. For the first part, by the Laplace transform, the original estimate for $v^k$ could be transformed to a resolvent estimate. The difficulty comes from higher order estimates. Compared with the KP-I equation in \cite{Rousset-Tzvetkov Poincare 2009}, \eqref{CH-KP-I} has stronger nonlinearity, and the corresponding linearized operator is weakly dispersive and nonlocal, making the energy estimates more challenging. What we do is to utilize the strong ``smoothing'' property together with a new cancellation mechanism resulting from the special structure of the nonlinearity. In this way, we are able to close the estimate at each iteration step. Finally the roughness of the energy estimates can be compensated by going to sufficiently high order approximation.

The rest of the paper is organized as follows. In Section \ref{Preliminary}, we present some notation, the Hamiltonian formulation and some preliminary results. In Section \ref{Linear Instability}, we will prove the linear instability. In Section \ref{Nonlinear Instability}, we will prove the nonlinear instability based on the linear instability. Several existence results will be given in the appendix.

\section{Preliminary}\label{Preliminary}
\subsection{Notation}\label{Notation}
We denote $|\cdot|_s$ for $\|\cdot\|_{H^s(\mathbb{R})}$ and $\|\cdot\|_s$ for $\|\cdot\|_{H^s(\mathbb{R}\times \mathbb{T}_a)}$, where $a=\frac{2\pi}{k_0}$ and $k_0$ will be given later. We also denote $\LA\cdot,\cdot\RA $ the inner product of $L^2(\mathbb{R})$. Finally, denote $\phi$ for $\phi_c$ for simplicity, where $\phi_c(x,y)$ is the line solitary wave of \eqref{CH-KP-I} with $\phi_c(x,y)=Q_c(x)$, and $Q_c$ represents the solitary wave of the CH equation with speed $c>2\kappa$. In the following, we will abuse using the notation of $\phi$ and $Q_c$ for convenience.
\subsection{Hamiltonian Formulation}\label{Mathematical Formulation}
The CH-KP-I equation \eqref{CH-KP-I} can be written in the Hamiltonian form:
\begin{align}\label{KP-CH Hamilton}
\begin{split}
u_t=\mathcal{J}\frac{\delta \mathcal{H}}{\delta u}=\LC 1-\partial_x^2\RC^{-1}\partial_x
\LC\frac{1}{2}u_x^2+uu_{xx}-2\kappa u-\frac{3}{2}u^2+\partial_x^{-2}\partial_y^2u\RC,
\end{split}
\end{align}
where 
\begin{align}\label{Hamiltonian}
\begin{split}
&\mathcal{J}=\LC 1-\partial_x^2\RC^{-1}\partial_x,\\ &\mathcal{H}=-\frac{1}{2}\int_{\mathbb{R}\times\mathbb{T}_a}\LB u^3+uu_x^2+2\kappa u^2-\LC\partial_x^{-1}\partial_yu\RC^2\RB dxdy,
\end{split}
\end{align}
and $\mathcal{H}$ is a conserved energy. A change of variable from $x$ to $x-ct$ yields that
\begin{align}\label{KP-CH Hamiltonian revised}
\begin{split}
u_t&=\mathcal{J}\frac{\delta(\mathcal{H}+c\mathcal{Q})}{\delta{u}}\\
&= \LC 1-\partial_x^2\RC^{-1}\partial_x\LC\frac{1}{2}u_x^2+uu_{xx}-2\kappa u-\frac{3}{2}u^2+\partial_x^{-2}\partial_y^2u+c(u-u_{xx})\RC,   
\end{split}
\end{align}
where $\mathcal{Q}=\frac{1}{2}\int_{\mathbb{R}\times\mathbb{T}_a}\LC u^2+u_x^2\RC dxdy$ is called the impulse which is another conserved quantity.
A line solitary wave $\phi$ with speed $c$ can be regarded as a critical point of $\mathcal{H}+c\mathcal{Q}$:
\begin{align}\label{solitary wave}
\frac{\delta (\mathcal{H}+c\mathcal{Q})}{\delta u}\LB\phi\RB=0.
\end{align}

\subsection{Preliminary Results}\label{Preliminary Results} We collect some results that will be used later. 
\begin{proposition}[\cite{Constantin Strauss 2002}]\label{solitary wave property}
The line solitary wave $\phi$ with speed $c>2\kappa$ satisfies the following properties:
\begin{enumerate}
    \item It is smooth, positive, even and decreasing from its peak of height $c-2\kappa$.
    \item It is concave when $\phi\in \LC c-\frac{\kappa}{2}-\sqrt{c\kappa+\frac{\kappa^2}{4}},c-2\kappa\RC$ and convex elsewhere.
    \item $\phi \sim \exp\LC-\sqrt{1-\frac{2\kappa}{c}}\abs{x}\RC $\ \ \text{for} \ $\abs{x}\rightarrow \infty$.
\end{enumerate}
\end{proposition}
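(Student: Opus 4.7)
Since $\phi$ depends only on $x$, the transverse term $\partial_x^{-2}\partial_y^2 u$ vanishes on $\phi$, and the variational identity \eqref{solitary wave} reduces, after applying $(1-\p_x^2)^{-1}\p_x$ to recover the profile equation, to the ODE
\begin{equation*}
\tfrac{1}{2}\phi_x^2 + \phi\,\phi_{xx} - \tfrac{3}{2}\phi^2 + (c-2\kappa)\phi - c\,\phi_{xx}=0,
\end{equation*}
valid for the spatial profile $Q_c$ (the constant of integration is zero by the decay requirement). The plan is to extract all three properties from this ODE and its first integral.

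Multiplying the ODE by $\phi_x$ and using $\frac{d}{dx}\bigl[\tfrac{1}{2}(c-\phi)\phi_x^2\bigr]=(c-\phi)\phi_x\phi_{xx}-\tfrac{1}{2}\phi_x^3$, I expect a clean integration yielding the first integral
\begin{equation*}
(c-\phi)\phi_x^2 = \phi^2\bigl(c-2\kappa-\phi\bigr),
\end{equation*}
again with the integration constant forced to zero by decay at infinity. Since $c>2\kappa$, the factor $c-\phi$ is positive for $\phi\in[0,c-2\kappa]$, so the right-hand side is a non-negative polynomial in $\phi$ with simple roots at $\phi=0$ and $\phi=c-2\kappa$. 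Standard phase-plane arguments then give the first assertion: the homoclinic orbit from $0$ back to $0$ reaches maximum height exactly $c-2\kappa$, yielding positivity, evenness (by reflection symmetry of the autonomous ODE), monotone decrease on $(0,\infty)$, and smoothness (the ODE is analytic where $c-\phi>0$, which holds everywhere since $\phi\le c-2\kappa<c$; at the peak $\phi_x=0$, plugging back into the ODE gives $\phi_{xx}=-(c-2\kappa)^2/(4\kappa)<0$, so the maximum is nondegenerate and the solution is smooth there).

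For the concavity statement I would substitute the first integral back into the ODE to eliminate $\phi_x^2$, obtaining
\begin{equation*}
(c-\phi)^2 \phi_{xx} \;=\; \phi\bigl[\phi^2-(2c-\kappa)\phi+c(c-2\kappa)\bigr].
\end{equation*}
Since $(c-\phi)^2>0$ and $\phi>0$, the sign of $\phi_{xx}$ is the sign of the quadratic in brackets. Its discriminant is $(2c-\kappa)^2-4c(c-2\kappa)=\kappa^2+4c\kappa$, so its roots are $c-\tfrac{\kappa}{2}\pm\sqrt{c\kappa+\tfrac{\kappa^2}{4}}$. A direct check (using $c>2\kappa$) shows that the lower root lies in $(0,c-2\kappa)$ while the upper root exceeds $c-2\kappa$, so on the physically attained range $\phi\in(0,c-2\kappa]$ the quadratic is negative precisely on $\bigl(c-\tfrac{\kappa}{2}-\sqrt{c\kappa+\tfrac{\kappa^2}{4}},\,c-2\kappa\bigr)$ and positive on the complementary sub-interval, giving property (2).

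For the decay rate, as $|x|\to\infty$ we have $\phi\to 0$ and the first integral linearizes to $\phi_x^2\sim\bigl(1-\tfrac{2\kappa}{c}\bigr)\phi^2$, so $\phi\sim e^{-\sqrt{1-2\kappa/c}\,|x|}$, giving (3). I do not expect any real obstacle here; the only minor subtlety is verifying smoothness and nondegeneracy at the peak and confirming that the root of the concavity quadratic falls in the relevant interval, both of which follow from the hypothesis $c>2\kappa$.
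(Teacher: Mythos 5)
Your proposal is correct, and the computations all check out: the profile ODE, the first integral $(c-\phi)\phi_x^2=\phi^2(c-2\kappa-\phi)$, the peak curvature $\phi_{xx}=-(c-2\kappa)^2/(4\kappa)$, the concavity identity $(c-\phi)^2\phi_{xx}=\phi\bigl[\phi^2-(2c-\kappa)\phi+c(c-2\kappa)\bigr]$, the location of the lower root of the quadratic in $(0,c-2\kappa)$, and the exponential decay rate from linearizing the first integral near $\phi=0$ all follow exactly as you say, with $c>2\kappa$ used at the right places. Note, however, that the paper itself gives no proof of this proposition---it is quoted verbatim from Constantin--Strauss (2002)---so there is no in-paper argument to compare against; what you have written is a correct self-contained reconstruction via the profile ODE and its first integral, which is the standard route and essentially the one taken in the cited reference.
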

\begin{theorem}[\cite{Constantin Strauss 2002}]\label{CH linearized operator}
For the linearized operator $H_c$ of the CH equation about the solitary wave $\phi$: $H^1(\mathbb{R})\rightarrow H^{-1}(\mathbb{R})$,
\begin{align}\label{H_c}
    H_c= -\partial_x\LC(c-\phi)\partial_x\RC+\phi''-3\phi-2\kappa+c,
\end{align}
it has exactly one simple negative eigenvalue, one simple zero eigenvalue and the rest of the spectrum is positive and bounded away from zero.
\end{theorem}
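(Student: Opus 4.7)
The plan is to treat $H_c$ as a standard Sturm--Liouville operator of the form $-\partial_x(p\partial_x) + q$ with $p(x)=c-\phi(x)$ and $q(x)=\phi''(x)-3\phi(x)-2\kappa+c$. By Proposition \ref{solitary wave property}(1) the maximum of $\phi$ is $c-2\kappa$, so $p(x)\geq 2\kappa>0$ uniformly; combined with the exponential decay in Proposition \ref{solitary wave property}(3), $p$ and $q$ are smooth and bounded with $p(x)\to c$ and $q(x)\to c-2\kappa$ as $|x|\to\infty$. Thus $H_c$ extends to a self-adjoint operator on $L^2(\mathbb{R})$ with domain $H^2(\mathbb{R})$.

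First I would locate the essential spectrum. Because the coefficients converge to positive constants at infinity with exponential rate, a Weyl-sequence argument (equivalently, the Liouville substitution $y=\int_0^x (c-\phi(s))^{-1/2}\,ds$ that conjugates $H_c$ to a Schr\"odinger operator $-\partial_y^2 + V(y)$ with $V(y)\to c-2\kappa$) gives $\sigma_{\mathrm{ess}}(H_c)=[c-2\kappa,\infty)$. In particular the essential spectrum is already bounded away from zero, and everything in $(-\infty,c-2\kappa)$ is discrete of finite multiplicity.

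To find $\ker H_c$, I would differentiate the profile equation \eqref{solitary wave} in $x$ (using translation invariance of $\mathcal{H}+c\mathcal{Q}$), which yields $H_c\phi'=0$; thus $\phi'$ is a bona fide $H^2$-eigenfunction. By Proposition \ref{solitary wave property}(1), $\phi$ is even and strictly decreasing on $[0,\infty)$, so $\phi'$ has a single simple zero at $x=0$. Classical Sturm oscillation theory for Sturm--Liouville problems asserts that an eigenfunction with exactly $n$ nodes is the $(n+1)$-st eigenfunction counted from the bottom; applying this with $n=1$ identifies $\phi'$ as the second eigenfunction. This simultaneously shows that the zero eigenvalue is simple and that there is exactly one simple eigenvalue strictly below it, namely the ground state, with nowhere-vanishing eigenfunction.

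The main obstacle is to rule out any additional discrete eigenvalues in the gap $(0,c-2\kappa)$, since soft Sturm theory only orders discrete eigenvalues without counting them. Here I would leverage the fine structure of $\phi$ in Proposition \ref{solitary wave property}(2), whereby $\phi$ has a single concave cap over a convex exponential tail. After the Liouville reduction, this translates into a single-well potential whose shape and decay permit a Bargmann--Calogero-type bound on the number of eigenvalues of $-\partial_y^2+V(y)-(c-2\kappa)$ below the continuum threshold; one then checks that exactly the two already-identified bound states (ground state and $\phi'$) saturate this count. An alternative path is a continuation-in-$c$ argument from the KdV-scaling limit $c\downarrow 2\kappa$ in which the spectral configuration is known, tracking the real-analytic dependence of eigenvalues to show that none can cross into $(0,c-2\kappa)$. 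This quantitative step is the delicate core of the proof and is where the explicit CH structure enters in an essential way.
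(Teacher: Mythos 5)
The paper itself offers no proof of this theorem: it is cited verbatim from Constantin--Strauss \cite{Constantin Strauss 2002}, so there is no in-paper argument to compare against. Your proposal is therefore a stand-alone reconstruction, and the first three paragraphs do constitute a correct and essentially standard proof: write $H_c$ in Sturm--Liouville form with $p=c-\phi\geq 2\kappa>0$, locate $\sigma_{\mathrm{ess}}(H_c)=[c-2\kappa,\infty)$ (which is $>0$ since $c>2\kappa$), observe that translation invariance gives $H_c\phi'=0$ with $\phi'\in H^2$ by the exponential decay, and invoke oscillation theory to conclude from the single sign change of $\phi'$ that $0$ is the second eigenvalue, hence simple, with exactly one simple eigenvalue (the nodeless ground state) strictly below it. That is indeed the route used in the cited source.

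Your fourth paragraph, however, identifies a ``main obstacle'' that is not actually an obstacle for the statement as written, and the extra machinery you propose there (Bargmann--Calogero bounds, continuation in $c$) is unnecessary. The theorem does \emph{not} assert that there are no further discrete eigenvalues in $(0,c-2\kappa)$; it only asserts that the spectrum other than $\{\lambda_0,0\}$ is positive and bounded away from zero. Both of these follow directly from what you already established: by Sturm ordering every eigenvalue beyond the second is strictly greater than $0$; and since $0$ lies strictly below the essential threshold $c-2\kappa$, it is an isolated eigenvalue of finite multiplicity, so the rest of the spectrum lies in $[\min(\lambda_2,\,c-2\kappa),\infty)$, which is bounded away from zero regardless of whether $\lambda_2$ exists. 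If you were instead trying to prove the stronger (and unneeded) claim that $\lambda_0$ and $0$ are the only discrete eigenvalues, then yes, a genuinely quantitative argument would be required --- but that is not the claim, and adding that step would overcomplicate an otherwise clean proof. I would simply delete the last paragraph and finish after the Sturm oscillation step, appending the one-line observation that isolation of the zero eigenvalue gives the uniform gap.
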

\section{Linear Instability}\label{Linear Instability}
In this section, we will first prove the linear instability, from which we will construct a most unstable eigenmode in the next section. Denote $v=u-\phi$, the linearized equation of \eqref{KP-CH Hamiltonian revised} about $\phi$ is 
\begin{align}\label{linearized equation}
\partial_tv=\mathcal{J}\mathcal{L}v,
\end{align}
where 
\begin{align}\label{linear Hamiltonian}
\mathcal{L}=-\partial_x\LC(c-\phi)\partial_x\RC+\LC\phi''-3\phi-2\kappa+c\RC+\partial_x^{-2}\partial_y^2.  
\end{align}
Let $v=e^{\sigma t}e^{ik y}U$, then
\begin{align*}
\sigma U=\mathcal{J}(k)\circ \mathcal{L}(k)U,  
\end{align*}
where 
\begin{align*}
\begin{split}
\mathcal{L}(k)=e^{-ik y}\mathcal{L}e^{ik y}
=-\partial_x((c-\phi)\partial_x)+\LC\phi''-3\phi-2\kappa+c\RC-k^2\partial_x^{-2},\quad \mathcal{J}(k)=\mathcal{J}.    
\end{split}
\end{align*}
Let $U=\mathcal{J}^*(k)W$. Then
\begin{align}\label{Generalized eigen}
\sigma\mathcal{J}(k)^*W=\tilde{\mathcal{L}}(k) W,  
\end{align}
where $\tilde{\mathcal{L}}(k)=\mathcal{J}(k)\mathcal{L}(k)\mathcal{J}^*(k)$. The proof of the linear instability is based on the following theorem:
\begin{theorem}[\cite{Rousset-Tzvetkov 2010}]\label{RT2010}
Assume the following conditions:
\begin{enumerate}
    \item There exist $K>0$ and $\alpha>0$ such that  $\tilde{\mathcal{L}}(k)\geq \alpha\text{Id}$ for $|k|\geq K$;
    \item The essential spectrum of $\tilde{\mathcal{L}}(k)$ is included in $[c_k,+\infty)$ with $c_k>0$ for $k\neq 0$;
    \item For every $k_1\geq k_2\geq 0$, we have $\tilde{\mathcal{L}}(k_1)\geq \tilde{\mathcal{L}}(k_2)$. In addition, if for some $k>0$ and $U\neq 0$, we have $\tilde{\mathcal{L}}(k)U=0$, then $\LA\tilde{\mathcal{L}}'(k)U,U\RA >0$;
    \item The spectrum of $\tilde{\mathcal{L}}(0)$ is under the form $\{-\lambda\}\cup I$ where $-\lambda<0$ is an isolated simple eigenvalue and $I$ is included in $[0,+\infty)$.
\end{enumerate}
Then there exist $\sigma >0$, $k\neq 0$ and $U$ solving \eqref{Generalized eigen}.
\end{theorem}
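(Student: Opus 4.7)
The plan is to construct the unstable eigenpair by bifurcating from a critical transverse wavenumber at which the bottom of $\mathrm{spec}(\tilde{\mathcal{L}}(k))$ crosses zero. First, introduce the curve $\mu(k)=\inf\mathrm{spec}(\tilde{\mathcal{L}}(k))$ on $k\geq 0$. Condition (4) gives $\mu(0)=-\lambda<0$; condition (1) gives $\mu(k)\geq\alpha>0$ for $|k|\geq K$; and the monotonicity clause of (3), combined with continuity of $\mu(k)$ (from analytic perturbation theory for the self-adjoint family $\tilde{\mathcal{L}}(k)$, with (2) ensuring that the eigenvalue continued from $-\lambda$ stays isolated from the essential spectrum), forces a unique $k^{*}\in(0,K)$ with $\mu(k^{*})=0$. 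By simplicity of $-\lambda$ and analytic continuation, the zero eigenvalue at $k^{*}$ is simple; let $U_{0}$ be a real $L^{2}$-normalised vector with $\tilde{\mathcal{L}}(k^{*})U_{0}=0$.

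Next, I would seek $(\sigma,W)$ with $\sigma>0$ solving \eqref{Generalized eigen} by bifurcating from $(k,\sigma,W)=(k^{*},0,U_{0})$. Since $\mathcal{J}^{*}$ is skew-adjoint, $\langle\mathcal{J}^{*}U_{0},U_{0}\rangle=0$, so the bifurcation is singular; the natural scaling is $k=k^{*}-\delta$, $\sigma=\sqrt{\delta}\,\sigma_{1}+O(\delta)$, $W=U_{0}+\sqrt{\delta}\,W_{1}+\delta W_{2}+\cdots$. Matching at order $\sqrt{\delta}$ gives $\tilde{\mathcal{L}}(k^{*})W_{1}=\sigma_{1}\mathcal{J}^{*}U_{0}$, whose solvability is automatic because $\mathcal{J}^{*}U_{0}\perp\ker\tilde{\mathcal{L}}(k^{*})=\mathrm{span}(U_{0})$; take $W_{1}=\sigma_{1}G$ with $G\perp U_{0}$ and $\tilde{\mathcal{L}}(k^{*})G=\mathcal{J}^{*}U_{0}$. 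Matching at order $\delta$ and projecting onto $U_{0}$ yields the scalar selection
\[
\sigma_{1}^{2}\,\langle\tilde{\mathcal{L}}(k^{*})G,G\rangle=\langle\tilde{\mathcal{L}}'(k^{*})U_{0},U_{0}\rangle.
\]
The right-hand side is strictly positive by the transversality clause of (3); the left-hand prefactor equals $\langle\mathcal{J}^{*}U_{0},G\rangle$, is non-negative because $\tilde{\mathcal{L}}(k^{*})\geq 0$, and is non-zero since $\mathcal{J}^{*}=-(1-\partial_{x}^{2})^{-1}\partial_{x}$ has trivial $L^{2}$-kernel, forcing $\mathcal{J}^{*}U_{0}\not\equiv 0$. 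Hence $\sigma_{1}^{2}>0$ and we pick $\sigma_{1}>0$.

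The final step is to upgrade the formal expansion to a genuine branch by a Lyapunov--Schmidt reduction: decompose $W=aU_{0}+W^{\perp}$ with $W^{\perp}\in U_{0}^{\perp}$, solve the $U_{0}^{\perp}$-component of \eqref{Generalized eigen} for $W^{\perp}$ as a smooth function of $(a,\sigma,\delta)$ via the implicit function theorem (using the bounded inverse of $\tilde{\mathcal{L}}(k)|_{U_{0}^{\perp}}$, uniform near $k^{*}$ by (2)), then solve the remaining scalar bifurcation equation whose leading order reproduces $\sigma\sim\sqrt{\delta}\,\sigma_{1}$. This yields, for every small $\delta>0$, a triple $(\sigma,k,W)=(\sqrt{\delta}\,\sigma_{1}+O(\delta),\,k^{*}-\delta,\,U_{0}+O(\sqrt{\delta}))$ solving \eqref{Generalized eigen} with $\sigma>0$ and $k>0$. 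The main technical obstacle is the uniform invertibility of $\tilde{\mathcal{L}}(k)$ on $U_{0}^{\perp}$ in a function space adapted to $\mathcal{J}^{*}$—the operator $\tilde{\mathcal{L}}$ is nonlocal and its principal part $-\mathcal{J}\partial_{x}((c-\phi)\partial_{x})\mathcal{J}^{*}$ is weakly elliptic—which is handled by choosing Sobolev spaces compatible with the explicit smoothing of $\mathcal{J}$ and exploiting the spectral gap guaranteed by (2).
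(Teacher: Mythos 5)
The paper does not contain its own proof of Theorem~\ref{RT2010}; it is imported directly from Rousset--Tzvetkov (2010), so there is nothing in the manuscript to compare against. On its merits your strategy is sound and, to the best of my knowledge, is essentially the argument in that reference: locate the critical transverse wavenumber $k^*$ where $\inf\operatorname{spec}\tilde{\mathcal{L}}(k)$ first touches zero, and bifurcate off the one-dimensional kernel with the $\sigma\sim\sqrt{\delta}$ scaling forced by the skew-adjointness of $\mathcal{J}^*$. The selection rule $\sigma_1^2\langle\tilde{\mathcal{L}}(k^*)G,G\rangle=\langle\tilde{\mathcal{L}}'(k^*)U_0,U_0\rangle$ is computed correctly, and the sign argument for both factors is right.

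One step needs tightening: the claim that $\ker\tilde{\mathcal{L}}(k^*)=\operatorname{span}(U_0)$ ``by simplicity of $-\lambda$ and analytic continuation.'' Continuation of the branch emanating from $-\lambda$ does not by itself rule out a second eigenvalue, emanating from the nonnegative set $I$ in $\operatorname{spec}\tilde{\mathcal{L}}(0)$, also vanishing at $k^*$. This is exactly where both clauses of hypothesis (3) are needed: monotonicity forces every ordered eigenvalue $\lambda_j(k)$ with $\lambda_j(0)\ge 0$ to stay nonnegative, while transversality makes any zero of such a branch at $k>0$ a strict increasing crossing (positive definiteness of $\langle\tilde{\mathcal{L}}'(k)\cdot,\cdot\rangle$ on $\ker\tilde{\mathcal{L}}(k)$ gives positive derivatives of all crossing branches by first-order perturbation theory). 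A nondecreasing, nonnegative branch cannot increase strictly through zero at an interior $k^*>0$, so only the branch from $-\lambda$ vanishes; hence $\dim\ker\tilde{\mathcal{L}}(k^*)=1$ and $\tilde{\mathcal{L}}(k^*)\ge 0$. With this filled in, the Lyapunov--Schmidt reduction closes as you describe: $\tilde{\mathcal{L}}(k^*)$ restricted to $U_0^\perp$ is boundedly invertible (by (2) the essential spectrum is bounded away from zero, and the rest of the point spectrum by the observation above), $\mathcal{J}^*$ is bounded, so the equation on $U_0^\perp$ is uniquely solvable for $W^\perp$ near $(\sigma,k)=(0,k^*)$, and the reduced scalar equation has leading part $\sigma^2\langle\tilde{\mathcal{L}}(k^*)G,G\rangle-(k^*-k)\langle\tilde{\mathcal{L}}'(k^*)U_0,U_0\rangle$, giving a real branch $\sigma(k)>0$ for $k$ slightly below $k^*$.
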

\begin{proposition}
(Existence of an unstable eigenmode)\label{unstable eigenmode}
If $c>2\kappa>0$, there exists one unstable eigenmode for  \eqref{linearized equation}.
\end{proposition}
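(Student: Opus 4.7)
The plan is to verify the four hypotheses of Theorem \ref{RT2010} for the self-adjoint family $\tilde{\mathcal{L}}(k) = \mathcal{J}(k)\mathcal{L}(k)\mathcal{J}^*(k)$; the conclusion of that theorem then delivers the unstable eigenmode directly. The convenient starting point is a structural simplification: since $\mathcal{J} = (1-\partial_x^2)^{-1}\partial_x$ is skew-adjoint, a short Fourier computation yields
\[
\mathcal{J}\bigl(-k^{2}\partial_x^{-2}\bigr)\mathcal{J}^{*} \;=\; k^{2}\bigl(1-\partial_x^{2}\bigr)^{-2},
\]
while at $k=0$ the inner factor $\mathcal{L}(0)$ coincides with the CH linearized operator $H_c$ of Theorem \ref{CH linearized operator}. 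Consequently
\[
\tilde{\mathcal{L}}(k) = \tilde{\mathcal{L}}(0) + k^{2}(1-\partial_x^{2})^{-2}, \qquad \tilde{\mathcal{L}}(0) = \mathcal{J}\,H_c\,\mathcal{J}^{*}.
\]
This decomposition instantly gives hypothesis (3): $\tilde{\mathcal{L}}'(k) = 2k(1-\partial_x^{2})^{-2}$ is strictly positive on nonzero vectors for $k>0$.

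For hypothesis (2), I would invoke Weyl's theorem, using the exponential decay of $\phi$ from Proposition \ref{solitary wave property} to treat the $\phi$-dependent coefficients as a relatively compact perturbation; the essential spectrum of $\tilde{\mathcal{L}}(k)$ is then the range of the Fourier symbol
\[
p_k(\xi) \;=\; \frac{c\,\xi^{4} + (c-2\kappa)\,\xi^{2} + k^{2}}{(1+\xi^{2})^{2}}.
\]
Because $c>2\kappa$, every coefficient in the numerator is positive, so $p_k(\xi) \geq c_k > 0$ whenever $k\neq 0$, furnishing the required lower bound. Hypothesis (1) is then obtained by pairing this essential-spectrum bound at infinity with the low-frequency contribution $k^{2}(1+\xi^{2})^{-2}$: the only negative contribution to $\tilde{\mathcal{L}}(0)$ comes from terms with spatially localized coefficients and constitutes a finite quadratic form, which is dominated once $|k|$ is sufficiently large.

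The main obstacle is hypothesis (4), which asks that $\tilde{\mathcal{L}}(0) = \mathcal{J}\,H_c\,\mathcal{J}^{*}$ possesses a single simple negative eigenvalue, isolated from a nonnegative remainder. The essential-spectrum computation above gives $\sigma_{\mathrm{ess}}(\tilde{\mathcal{L}}(0)) \subset [0,\infty)$, so any negative spectrum is automatically discrete and isolated. To count the negative eigenvalues, I would apply Courant's min--max principle to the identity
\[
\LA \tilde{\mathcal{L}}(0) W, W\RA \;=\; \LA H_c \mathcal{J}^{*} W, \mathcal{J}^{*} W\RA.
\]
Since $\mathcal{J}^{*} = -(1-\partial_x^{2})^{-1}\partial_x$ is injective on $L^{2}(\mathbb{R})$ with dense range (its kernel and cokernel consist only of constants), this identity transfers the negative-cone count of $H_c$ to $\tilde{\mathcal{L}}(0)$ via a density/continuity argument in the form domain; combined with Theorem \ref{CH linearized operator} it yields exactly one negative direction. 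Simplicity then follows from the injectivity of $\mathcal{J}^{*}$ together with the simplicity of the ground state of $H_c$.

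Once all four hypotheses are verified, Theorem \ref{RT2010} produces $\sigma>0$, $k\neq 0$, and $W\neq 0$ solving \eqref{Generalized eigen}; setting $U = \mathcal{J}^{*}(k)W$ and $v = e^{\sigma t}e^{iky}U$ recovers a genuine growing mode of \eqref{linearized equation}. The delicate point in this plan is precisely the spectral transfer from $H_c$ to $\mathcal{J}\,H_c\,\mathcal{J}^{*}$: because $\mathcal{J}^{*}$ is not surjective, the min--max count on its range must be justified in the natural form-domain topology rather than in $L^{2}$ alone, and one must check that the ground-state direction of $H_c$ can be approximated within that range without destroying the sign of the quadratic form.
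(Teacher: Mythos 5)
Your plan follows the same overall route as the paper: verify hypotheses (1)--(4) of Theorem \ref{RT2010} for $\tilde{\mathcal{L}}(k)$, using the identification $\tilde{\mathcal{L}}(0) = \mathcal{J}H_c\mathcal{J}^*$ together with Theorem \ref{CH linearized operator}. Your decomposition $\tilde{\mathcal{L}}(k) = \tilde{\mathcal{L}}(0) + k^2(1-\partial_x^2)^{-2}$ cleanly packages what the paper uses implicitly, and your treatments of hypotheses (2), (3), (4) are essentially the paper's (the paper routes (2) through a Fredholm-index comparison with the operator $n(k)$ rather than invoking Weyl's theorem directly, but the limiting symbol $c\xi^4+(c-2\kappa)\xi^2+k^2$ is the same; a small slip in your (4): on $L^2(\mathbb{R})$ the kernel and cokernel of $\mathcal{J}^*$ are trivial, not ``constants,'' since constants are not square-integrable on the line --- but injectivity with dense range is what the approximation argument needs and that does hold).

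The one genuine gap is hypothesis (1). You assert that the negative part of $\tilde{\mathcal{L}}(0)$ is ``dominated once $|k|$ is sufficiently large'' by the added term $k^2(1-\partial_x^2)^{-2}$. That is not a valid operator comparison: the symbol $k^2(1+\xi^2)^{-2}$ tends to $0$ as $|\xi|\to\infty$, so $k^2(1-\partial_x^2)^{-2}$ has spectrum reaching down to $0$ for every $k$ and is never bounded below by a positive multiple of the identity. It therefore cannot absorb a negative quadratic form that is merely bounded on $L^2$, no matter how large $k$ is taken. The mechanism that actually closes (1) is a cross-frequency interpolation, which is what the paper's computation does: setting $v=\mathcal{J}^*u$, one writes
\[
\left\langle\tilde{\mathcal{L}}(k)u,u\right\rangle = \left\langle(c-\phi)v_x, v_x\right\rangle + \left\langle\left(\phi''-3\phi-2\kappa+c\right)v, v\right\rangle + k^2\left|\partial_x^{-1}v\right|_0^2,
\]
uses $c-\phi\geq 2\kappa$ for the first term, bounds the middle term below by $-\alpha|v|_0^2$, absorbs it via $|v|_0^2\leq|v_x|_0\,|\partial_x^{-1}v|_0$ once $k^2$ exceeds a threshold, and then converts the result into $\tilde{\mathcal{L}}(k)\geq\alpha\,\mathrm{Id}$ using the two-sided bound on the multiplier $(\xi^4+1)/(1+\xi^2)^2$, which shows $|v_x|_0^2+|\partial_x^{-1}v|_0^2$ is comparable to $|u|_0^2$. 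Your remark about ``pairing the essential-spectrum bound at infinity with the low-frequency contribution'' is the right intuition, but without the interpolation step the domination claim you make does not follow.
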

\begin{proof}
According to Theorem \ref{RT2010}, it suffices to verify conditions (1)-(4) for $\tilde{\mathcal{L}}(k):\ H^2(\mathbb{R})\rightarrow L^2(\mathbb{R})$.\\
(1) It is easy to see that 
\begin{align*}
\begin{split}
\LA\tilde{\mathcal{L}}(k)u,u\RA = & \LA(c-\phi)\LC 1-\partial_x^2\RC^{-1}u_{xx},\LC 1-\partial_x^2\RC^{-1}u_{xx}\RA \\
& +\LA\LC\phi''-3\phi-2\kappa+c\RC\LC 1-\partial_x^2\RC^{-1}u_x,\LC 1-\partial_x^2\RC^{-1}u_x\RA \\
& +k^2\LA\LC 1-\partial_x^2\RC^{-1}u,\LC 1-\partial_x^2\RC^{-1}u\RA \\
\geq & \ \kappa \left|\LC 1-\partial_x^2\RC^{-1}u_{xx}\right|_0^2-\alpha \left|\LC 1-\partial_x^2\RC^{-1}u_x\right|_{0}^2+k^2 \left|\LC 1-\partial_x^2\RC^{-1}u\right|_{0}^2.    
\end{split}
\end{align*}
Here we require $\kappa>0$ and have used Proposition \ref{solitary wave property}. Then since $\left|\LC 1-\partial_x^2\RC^{-1}u_x\right|_{0}$ can be controlled by $\left|\LC 1-\partial_x^2\RC^{-1}u_{xx}\right|_{0}$ and $\left|\LC 1-\partial_x^2\RC^{-1}u\right|_{0}$, (1) is verified.\\
(2)
Consider 
$n(k):\ \ H^4(\mathbb{R})\rightarrow L^2(\mathbb{R})$  as
\begin{align*}
\tilde{\mathcal{L}}(k)=\LC 1-\partial_x^2\RC^{-1}n(k)\LC 1-\partial_x^2\RC^{-1},   
\end{align*}
thus 
\begin{align*}
n(k)=-\partial_x\LC-\partial_x\LC(c-\phi)\partial_x\RC+\LC\phi''-3\phi-2\kappa+c\RC\RC\partial_x+k^2.   
\end{align*}
It can be seen that $\tilde{\mathcal{L}}(k)$ and $n(k)$ have the same Fredholm index since $\LC 1-\partial_x^2\RC^{-1}: H^{s-2}(\mathbb{R}) \rightarrow H^s(\mathbb{R})$ has Fredholm index $0$. Thus the essential spectrum of $\tilde{\mathcal{L}}(k)$ and $n(k)$ are the same. By Weyl's lemma, we just need to study the essential spectrum of the limiting operator 
\begin{align*}
n_{\infty}(k)=c\partial_x^4-(c-2\kappa)\partial_x^2+k^2. 
\end{align*}
Consider $(n_{\infty}(k)-\beta)u=f$, since $c>2\kappa$. Using Fourier transform, we have the essential spectrum lying in $[c_k,\infty)$ for some $c_k>0$. \\
(3) Direct computation yields
\begin{align*}
\LA\tilde{\mathcal{L}}'(k)u,u\RA =2k\LA \LC 1-\partial_x^2\RC^{-1}u,\LC 1-\partial_x^2\RC^{-1}u\RA >0\ \ \mbox{for}\ k>0.    
\end{align*}
(4)
The proof follows from the discussion on KP-I in \cite{Rousset-Tzvetkov 2010}. Observe from \eqref{H_c} that we can write $\tilde{\mathcal{L}}(0)$ as
$-\mathcal{J}H_c\mathcal{J}$. By Theorem \ref{CH linearized operator}, it has a unique simple negative eigenvalue with the associated eigenvector $\psi$. By the approximation argument, we have $\mathcal{J}u_n$ tending to $\psi$, and
\begin{align*}
\LA \tilde{\mathcal{L}}(0)u_n,u_n\RA =\LA  H_c\mathcal{J}u_n,\mathcal{J}u_n\RA <0    
\end{align*}
for $n$ sufficiently large. Thus $\tilde{\mathcal{L}}(0)$ has at least one negative eigenvalue. On the other hand, for $u$ with $\LA \mathcal{J} u,\psi\RA =0$, we have $\LA \tilde{\mathcal{L}}(0)u,u\RA =\LA  H_c\mathcal{J} u,\mathcal{J} u\RA \geq 0$. Thus we conclude that $\tilde{\mathcal{L}}(0)$ just has one negative eigenvalue which is simple.
\end{proof}
\section{Nonlinear Instability}\label{Nonlinear Instability}
\subsection{Construction of a most unstable eigenmode}\label{Construction of a most unstable eigenmode}
As discussed in the previous section, there exists an unstable mode $k_0\neq 0$ which indicates the linear instability. In the rest of the paper, we consider the period with respect to $y$ to be $a=\frac{2\pi}{k_0}$. Let 
\begin{align*}
v=e^{\sigma t}e^{im k_0y}U(m,x), \ \ \ m\in \mathbb{Z},
\end{align*}
which solves $\partial_tv=\mathcal{J}\mathcal{L}v$. By Fourier transforming with respect to $y$, we have
\begin{align}\label{find most eigenmode}
\sigma U=\mathcal{J}\mathcal{L}(mk_0)U.
\end{align}

The construction of a most unstable eigenmode is based on the following lemma:
\begin{lem}[\cite{Rousset-Tzvetkov JMPA 2008}]\label{RT JMPA 2008}
Consider the problem \eqref{find most eigenmode}. There exists $K>0$ such that  for $|mk_0|\geq K$, there is no nontrivial solution with $Re(\sigma)\neq 0$. In addition, for every $k\neq 0$, there is at most one unstable mode with corresponding transverse frequency $k$.
\end{lem}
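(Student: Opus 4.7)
The lemma has two parts: (A) absence of unstable modes for $|mk_0| \geq K$, and (B) uniqueness of an unstable mode at each fixed transverse frequency $k \neq 0$. My plan is to work with the symmetrized eigenvalue problem $\sigma \mathcal{J}^*(k) W = \tilde{\mathcal{L}}(k) W$ introduced just before Theorem \ref{RT2010}, exploiting that $\mathcal{J}^* = -\mathcal{J}$ is skew-adjoint while $\tilde{\mathcal{L}}(k)$ is self-adjoint, together with the four structural properties already verified in the proof of Proposition \ref{unstable eigenmode}.

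For (A), the standard anti-self-adjoint pairing trick applies. Testing $\sigma \mathcal{J}^*(k) W = \tilde{\mathcal{L}}(k) W$ against $W$ in $L^2(\R)$ gives $\sigma \LA \mathcal{J}^*(k) W, W\RA = \LA \tilde{\mathcal{L}}(k) W, W\RA$. Skew-adjointness of $\mathcal{J}^*$ makes the left-hand inner product $i\beta$ with $\beta \in \R$, while the right-hand side is real. Writing $\sigma = a + ib$ and splitting real and imaginary parts yields $a\beta = 0$ and $-b\beta = \LA \tilde{\mathcal{L}}(k) W, W\RA$. For $|k| \geq K$, condition (1) of Theorem \ref{RT2010} gives $\LA \tilde{\mathcal{L}}(k) W, W\RA \geq \alpha |W|_0^2 > 0$ when $W \neq 0$, so the hypothesis $\mathrm{Re}(\sigma) = a \neq 0$ forces $\beta = 0$, which forces $W = 0$, a contradiction.

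For (B), the same real/imaginary split shows that \emph{every} unstable eigenvector at a fixed $k$ satisfies the isotropy $\LA \tilde{\mathcal{L}}(k) W, W\RA = 0$. By the monotonicity in condition (3) of Theorem \ref{RT2010} combined with the evenness $\tilde{\mathcal{L}}(k) = \tilde{\mathcal{L}}(-k)$ built into \eqref{Generalized eigen}, the max--min principle upgrades the single negative eigenvalue of $\tilde{\mathcal{L}}(0)$ from condition (4) to the uniform bound $n(\tilde{\mathcal{L}}(k)) \leq 1$ for every $k \neq 0$. I then plan to invoke the Hamiltonian instability-index inequality for $\mathcal{J}\mathcal{L}(k)$ (a Pontryagin/Krein-signature bound): the number of unstable eigenvalues, counted with algebraic multiplicity and complex ones counted twice, does not exceed $n(\tilde{\mathcal{L}}(k))$. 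Together with $n(\tilde{\mathcal{L}}(k)) \leq 1$ this forces at most one unstable eigenvalue, which must in fact be real.

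The main obstacle is the rigorous deployment of this index inequality in the present nonlocal setting: $\mathcal{J} = (1-\partial_x^2)^{-1}\partial_x$ has an enormous kernel on $L^2(\R)$, $\tilde{\mathcal{L}}(k) \maps H^2(\R) \to L^2(\R)$ is unbounded, and one must rule out Jordan-block degeneracies at eigenvalues with positive real part so that the multiplicity count is clean. All the structural hypotheses needed (skew-adjointness of $\mathcal{J}$, self-adjointness of $\tilde{\mathcal{L}}(k)$, control of the essential spectrum from condition (2), and the uniform negative-index bound) were already verified in the proof of Proposition \ref{unstable eigenmode}, so the cleanest closure is to match these to the hypotheses of the Rousset--Tzvetkov JMPA 2008 framework and quote its conclusion rather than redevelop the Pontryagin calculus from scratch.
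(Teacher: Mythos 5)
The paper does not actually prove this lemma; it is quoted verbatim from \cite{Rousset-Tzvetkov JMPA 2008}, and the only indication of the argument is the one-line Remark that ``the proof is based on the fact that $\mathcal{L}(mk_0)$ is positive definite.'' Your reconstruction of part (A) captures exactly this idea, but you route it through the conjugated operator $\tilde{\mathcal{L}}(k)$ and the $W$-formulation $\sigma\mathcal{J}^*(k)W=\tilde{\mathcal{L}}(k)W$. That introduces a subtlety you don't address: an arbitrary eigenfunction $U$ of $\sigma U=\mathcal{J}\mathcal{L}(k)U$ need not lie in the range of $\mathcal{J}^*=-(1-\partial_x^2)^{-1}\partial_x$ (on $L^2(\R)$ this range is dense but proper, since the multiplier $-i\xi/(1+\xi^2)$ vanishes at $\xi=0$), so showing there is no nontrivial $W$-solution does not by itself kill all $U$-solutions. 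The cleaner route, which is precisely what the Remark points at, is to pair the original equation $\sigma U=\mathcal{J}\mathcal{L}(k)U$ directly against $\mathcal{L}(k)U$: skew-adjointness of $\mathcal{J}$ makes $\langle\mathcal{J}\mathcal{L}(k)U,\mathcal{L}(k)U\rangle$ purely imaginary, so $\mathrm{Re}(\sigma)\,\langle\mathcal{L}(k)U,U\rangle=0$; positive definiteness of $\mathcal{L}(mk_0)$ for $|mk_0|$ large then forces $\mathrm{Re}(\sigma)=0$ unless $U=0$, with no change of variables needed.

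For part (B), your sketch of a Krein/Pontryagin index count $n(\tilde{\mathcal{L}}(k))\le 1$ is indeed the standard mechanism behind ``at most one unstable mode,'' and you are right that closing it rigorously in this nonlocal, generalized-eigenvalue setting requires care (the relevant count is for the pencil $\sigma\mathcal{J}^*(k)W=\tilde{\mathcal{L}}(k)W$, not a plain $JL$ problem, and one must handle Jordan degeneracy and the essential spectrum). You honestly flag this and end by deferring to the Rousset--Tzvetkov JMPA~2008 framework, which is in effect exactly what the paper itself does by citing the lemma rather than reproving it. So your proposal is consistent with the paper's (non-)proof; the only genuine correction needed is the $U$-versus-$W$ point in part (A).
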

\begin{remark}
The proof of Lemma \ref{RT JMPA 2008} is based on the fact that $\mathcal{L}(mk_0)$ is positive definite, which is easy to check.
\end{remark}
According to Lemma \ref{RT JMPA 2008}, $\sigma_0, U_0$ can be chosen corresponding to the maximal $m_0$, and the most unstable eigenmode $v^0$ can be written as 
\begin{align*}
v^0=2\text{Re}( e^{\sigma_0t}e^{im_0k_0y}U_0).    
\end{align*}

We now construct the unstable solution $u^{\delta}$ with initial data $\phi+\delta v^0(0,x,y)$. Let $v=u^{\delta}-\phi$, then it satisfies 
\begin{align}\label{nonlinear equation}
\partial_tv=\mathcal{J}\mathcal{L}v+\mathcal{J}\LC\frac{1}{2}v_x^2+vv_{xx}-\frac{3}{2}v^2\RC,\quad v(0,x,y)=\delta v^0(0,x,y).
\end{align}
Thus in order to prove the nonlinear instability of \eqref{CH-KP-I}, it suffices to study the behavior of $v$.
\subsection{Construction of a high order unstable approximate solution} \label{Construction of a high order unstable approximate solution}
Define $V_K^s$ as the following truncated space:
\begin{align*}
V_K^s=\left\{u:\ u=\sum_{j=-K}^{j=K}u_je^{ijm_0k_0y},\ u_j\in H^s(\mathbb{R})\right\} , 
\end{align*}
where the norm on $V_K^s$ is defined by $|u|_{V_K^s}=\mbox{sup}_j|u_j|_{s}$. It can be seen that $v^0\in V_1^s$ for all $s\in\mathbb{N}$. We look for a high order approximate solution 
\begin{align*}
v^{ap}=\delta\LC v^0+\sum_{k=1}^{M}\delta^{k}v^k\RC,\quad v^k\in V_{k+1}^{s-k}.
\end{align*}
By matching the orders of $\delta$, it yields that
\begin{equation}\label{approximate equation}
\left\{\begin{aligned}
\partial_tv^k=&\mathcal{J}\mathcal{L}v^k+\mathcal{J}\LB\frac{1}{2}\LC\sum_{j+l=k-1}v_x^jv_x^l\RC+\sum_{j+l=k-1}v^jv_{xx}^l-\frac{3}{2}\sum_{j+l=k-1}v^jv^l\RB,\\ v^k|_{t=0}&=0    
\end{aligned}
\right.
\end{equation}
for $1\leq k\leq M$.
\begin{proposition}\label{v estimate}
Let $v^k$ be the solution of \eqref{approximate equation}, if $s-k\geq 0$ then
\begin{align}
\big|v^k\big|_{V_{k+1}^{s-k}} \leq C_{M,s}e^{(k+1)\sigma_0t},  
\end{align}
where $C_{M,s}>0$ depends on the approximation order $M$ and regularity $s$.
\end{proposition}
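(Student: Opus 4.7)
The plan is to argue by induction on $k$. The base case $k=0$ is immediate, since $v^0=2\operatorname{Re}(e^{\sigma_0 t}e^{im_0 k_0 y}U_0)$ is an explicit eigenmode of $\mathcal{J}\mathcal{L}$ and so $|v^0|_{V_1^s}\leq C_s e^{\sigma_0 t}$. For the inductive step I assume the stated bound for all $0\leq j\leq k-1$ and unpack the source term in \eqref{approximate equation}. Because the nonlinearity is purely quadratic, every product $v^a v^b_{xx}$, $v^a_x v^b_x$ or $v^a v^b$ with $a+b=k-1$ carries the common time weight $e^{(a+1)\sigma_0 t}\cdot e^{(b+1)\sigma_0 t}=e^{(k+1)\sigma_0 t}$, and the Fourier modes in $y$ it produces have index at most $(a+1)+(b+1)=k+1$. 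Combined with Moser-type bilinear estimates in $H^{s-k}_x$ (valid once $s$ is above the algebra threshold, which is what the hypothesis $s-k\geq 0$ provides after a harmless fixed shift in $s$), the source admits the form
$$\mathcal{J}\LB\tfrac{1}{2}\sum_{a+b=k-1}v^a_x v^b_x+\sum_{a+b=k-1}v^a v^b_{xx}-\tfrac{3}{2}\sum_{a+b=k-1}v^a v^b\RB=e^{(k+1)\sigma_0 t}\sum_{|n|\leq k+1}F^k_n(x)e^{inm_0 k_0 y},$$
with $|F^k_n|_{s-k}\leq C_{M,s}$ uniformly in $t$, the extra derivative gained from $\mathcal{J}$ compensating the derivative loss from $v^b_{xx}$.

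Next I would solve \eqref{approximate equation} mode by mode. Substituting the ansatz
$$v^k=e^{(k+1)\sigma_0 t}\sum_{|n|\leq k+1}V^k_n(x)e^{inm_0 k_0 y}+w^k(t,x,y)$$
reduces the PDE to the scalar resolvent problem
$$\LC (k+1)\sigma_0-\mathcal{J}\mathcal{L}(nm_0 k_0)\RC V^k_n=F^k_n,\qquad |n|\leq k+1,$$
together with a homogeneous transport $\partial_t w^k=\mathcal{J}\mathcal{L}w^k$ with $w^k|_{t=0}=-\sum V^k_n(x)e^{inm_0 k_0 y}$, chosen so that $v^k|_{t=0}=0$. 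The key solvability fact is that for every $n\in\mathbb{Z}$ and every $k\geq 1$, the number $(k+1)\sigma_0$ lies in the resolvent set of $\mathcal{J}\mathcal{L}(nm_0 k_0)$: by Lemma \ref{RT JMPA 2008} only the frequencies $n=\pm m_0$ can host an unstable eigenvalue, and by the maximality of $\sigma_0$ that eigenvalue is exactly $\sigma_0<(k+1)\sigma_0$. The transient $w^k$ is propagated by the linearized flow, whose maximal exponential growth rate is $\sigma_0$, and hence $|w^k|_{V_{k+1}^{s-k}}\lesssim e^{\sigma_0 t}\leq e^{(k+1)\sigma_0 t}$, which combines with the growth of the particular solution to close the induction.

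The main obstacle, and the place where the argument departs from the semilinear template of \cite{Rousset-Tzvetkov Poincare 2009}, is to establish the resolvent bound at the correct Sobolev level. Because $\mathcal{L}$ involves the variable coefficient $c-\phi$ multiplied by a second $x$-derivative together with the nonlocal term $-k^2\partial_x^{-2}$, while $\mathcal{J}$ gains only one derivative, a naive estimate of $((k+1)\sigma_0-\mathcal{J}\mathcal{L}(nm_0 k_0))^{-1}$ would bleed further derivatives as $k$ grows. I would instead pass through the self-adjoint reformulation $\tilde{\mathcal{L}}(k)=\mathcal{J}\mathcal{L}(k)\mathcal{J}^*(k)$ introduced in Section \ref{Linear Instability} and run higher-order energy estimates on the equivalent system, exploiting the positivity of $\tilde{\mathcal{L}}(nm_0 k_0)$ at large $|n|$ (condition (1) of Theorem \ref{RT2010}) together with the cancellation structure hidden in the quasilinear piece $\partial_x((c-\phi)\partial_x)$. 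This is the ``smoothing plus cancellation'' mechanism alluded to in the introduction, and the loss of exactly one Sobolev index per resolvent inversion is precisely what dictates the regularity count $s-k$ in the statement; the constants accumulated over the $M$ iterations are absorbed into $C_{M,s}$.
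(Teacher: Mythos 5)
Your inductive frame and the bookkeeping of the source term are sound: the quadratic interactions $v^a v^b$, $v^a_x v^b_x$, $v^a v^b_{xx}$ with $a+b=k-1$ do produce at most $k+1$ Fourier modes in $y$, and the product of two exponential bounds gives the weight $e^{(k+1)\sigma_0 t}$; the $\mathcal{J}$-smoothing offsetting the $\partial_x^2$ loss is also the right observation. Up to that point you are in step with the paper, which likewise reduces Proposition \ref{v estimate} to a statement about the forced linear problem (its Theorem \ref{u estmate}).

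The route you then take to solve the forced problem, however, is not the paper's, and it has two genuine gaps. First, the ansatz $v^k = e^{(k+1)\sigma_0 t}\sum_n V^k_n(x)e^{inm_0k_0 y} + w^k$ with $V^k_n$ time-independent and a resolvent equation $\bigl((k+1)\sigma_0-\mathcal{J}\mathcal{L}(nm_0k_0)\bigr)V^k_n=F^k_n$ requires $F^k_n$ to be time-independent as well. That holds at $k=1$ (the source there is $(v^0)^2$ and $v^0$ is an exact exponential mode), but it fails for $k\ge 2$: the source at step $k$ contains products such as $v^0\,w^{k-1}_{xx}$, and the transient $w^{k-1}$ evolves under the full semigroup of $\mathcal{J}\mathcal{L}$, not as a pure exponential. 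So $F^k$ is not of the separable form $e^{(k+1)\sigma_0 t}\cdot(\text{time-independent})$, and the resolvent reduction breaks down. If instead you only mean that $\lvert F^k_n(t,\cdot)\rvert_{s-k}$ is uniformly bounded, then the displayed resolvent equation no longer follows from the ansatz. Second, your bound $\lvert w^k\rvert_{V_{k+1}^{s-k}}\lesssim e^{\sigma_0 t}$ invokes a semigroup growth estimate for $e^{t\mathcal{J}\mathcal{L}}$ in $H^{s-k}$. Obtaining such a bound for this quasilinear, weakly dispersive, nonlocal linearization is exactly what the authors say they cannot do directly (loss of derivative in closing the energy estimate), and it is the reason they adopt Grenier's scheme in the first place. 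Asserting the semigroup bound here is circular: it is at least as hard as what Proposition \ref{v estimate} is supposed to establish.

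The paper's actual mechanism is different and avoids both difficulties. It proves Theorem \ref{u estmate} by Fourier transforming in $y$, Laplace transforming in $t$ at a line $\mathrm{Re}(\sigma)=\gamma_0$ with $\sigma_0<\gamma_0<\gamma$, and then proving a resolvent estimate $\lvert w\rvert_s^2\le C\lvert H\rvert_{s-1}^2$ for the transformed equation \eqref{resolvent equation}. No particular/homogeneous split and no semigroup bound are used; the exponential-in-time estimate is recovered afterwards via Bessel--Parseval and a Gronwall step. The higher-order resolvent estimate is closed not by passing to $\tilde{\mathcal{L}}$, as you suggest, but by first rewriting the leading term through \eqref{important transformation} (which exposes the transport operator $(c-\phi)\partial_x$ modulo a smoothing remainder) and then testing against $(-1)^{s+1}\partial_x^{2s+2}w + (-1)^{s+1}\partial_x^{s+1}\bigl(r_{s+1}(x)\partial_x^{s+1}w\bigr)$, where the weight $r_{s+1}$ is chosen to solve a Bernoulli ODE \eqref{rs equation} that cancels the $-\bigl(s+\tfrac{1}{2}\bigr)\phi'\lvert\partial_x^{s+1}w\rvert^2$ term. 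That explicit cancellation, together with the ODE/Evans-function analysis for bounded $\tau$, is the substitute for the semigroup estimate you are implicitly relying on. Finally, the $s-k$ regularity count comes from the fact that the step-$k$ source contains $v^j v^l_{xx}$, costing one derivative before $\mathcal{J}$ gains it back (Theorem \ref{u estmate} takes $F\in V_K^{s-1}$ to $u\in V_K^s$), not from ``one Sobolev index per resolvent inversion.''
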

\begin{remark}
This proposition implies that the effect of $v^k$ can be controlled by $v^0$.
\end{remark}
Indeed, the above proposition can be easily derived by induction from the following theorem:
\begin{theorem}\label{u estmate}
Consider the solution $u$ of the linear problem 
\begin{align}\label{linear problem}
\partial_tu=\mathcal{J}\mathcal{L}u+\mathcal{J}F    
\end{align}
with $F\in V_{K}^{s-1}$ and 
\begin{align*}
|F|_{V_{K}^{s-1}} \leq C_{K,s}e^{\gamma t},\ \gamma\geq 2\sigma_0,
\end{align*}
then $u\in V_{K}^s$ and satisfies the estimate 
\begin{align*}
|u|_{V_K^s}\leq C_{K,s}e^{\gamma t}.  
\end{align*}
\end{theorem}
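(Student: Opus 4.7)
My plan is to exploit the Fourier decomposition in $y$ to reduce to scalar $x$-dependent equations, then combine Duhamel's formula with an $H^s$ semigroup (equivalently, resolvent) bound for $\mathcal{J}\mathcal{L}(jm_0k_0)$. Expand $u=\sum_{|j|\leq K}u_j(t,x)e^{ijm_0k_0y}$ and $F=\sum_{|j|\leq K}F_j(t,x)e^{ijm_0k_0y}$. Because $\mathcal{J}\mathcal{L}$ acts diagonally on such series, each $u_j$ satisfies the scalar equation
$$\partial_t u_j=\mathcal{J}\mathcal{L}(jm_0k_0)u_j+\mathcal{J}F_j,\qquad u_j(0)=0,$$
with $|F_j(t)|_{s-1}\leq C_{K,s}e^{\gamma t}$ uniformly in $|j|\leq K$. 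Since $|u|_{V_K^s}=\sup_j|u_j|_s$, it suffices to prove $|u_j(t)|_s\leq C_{K,s}e^{\gamma t}$ for each such $j$.

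By Duhamel's formula,
$$u_j(t)=\int_0^t e^{(t-\tau)\mathcal{J}\mathcal{L}(jm_0k_0)}\mathcal{J}F_j(\tau)\,d\tau.$$
The whole argument then rests on an $H^s$ semigroup bound of the form $|e^{t\mathcal{J}\mathcal{L}(jm_0k_0)}w|_s\leq C_{K,s}e^{(\sigma_0+\varepsilon)t}|w|_s$ for some (arbitrarily small) $\varepsilon>0$, uniformly in $|j|\leq K$. This is the natural target because $\sigma_0$ was chosen as the largest real part among unstable eigenvalues over all transverse frequencies, and Lemma \ref{RT JMPA 2008} limits the number of unstable modes to finitely many. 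Granting this bound and using the one-derivative smoothing $|\mathcal{J}F_j|_s\leq C|F_j|_{s-1}$, I would estimate
$$|u_j(t)|_s\leq C\int_0^t e^{(\sigma_0+\varepsilon)(t-\tau)}e^{\gamma\tau}\,d\tau\leq \frac{Ce^{\gamma t}}{\gamma-\sigma_0-\varepsilon},$$
which is finite once $\varepsilon<\sigma_0$, since the hypothesis $\gamma\geq 2\sigma_0$ yields $\gamma-\sigma_0-\varepsilon\geq \sigma_0-\varepsilon>0$. This is precisely the bound claimed by the theorem.

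The genuine obstacle is the $H^s$ semigroup bound. As the introduction suggests, I would obtain it through a Laplace transform and a resolvent estimate on $(\lambda-\mathcal{J}\mathcal{L}(jm_0k_0))^{-1}\mathcal{J}$ for $\mathrm{Re}\,\lambda>\sigma_0$. The operator $\mathcal{J}\mathcal{L}(jm_0k_0)$ is first order in $x$, has non-constant coefficients coming from $c-\phi$, $\phi'$, $\phi''$, and contains the nonlocal piece $-k^2\partial_x^{-2}$, so a spectral decomposition will not close the estimate. My plan is to build an energy weighted by $\mathcal{L}(jm_0k_0)$, use the self-adjointness of $\mathcal{L}(jm_0k_0)$ combined with the antisymmetry $\mathcal{J}^*=-\mathcal{J}$ to cancel the leading-order transport term $\partial_x((c-\phi)\partial_x)$, and absorb the source $\mathcal{J}g$ using the one-derivative gain of $\mathcal{J}=(1-\partial_x^2)^{-1}\partial_x$. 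The truly delicate point is the top-order commutator $[(1-\partial_x^2)^{s/2},c-\phi]$ at each Sobolev level, where the structural cancellation specific to the CH-KP-I nonlinearity must be exploited. The key consistency check is that this cancellation can be iterated, so that at each step the newly appearing top-order contribution is controlled by the preceding-order quantities, letting the estimate close uniformly at every $s$.
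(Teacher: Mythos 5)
Your Fourier-in-$y$ decomposition and the plan to prove the bound via a Laplace transform and a weighted-energy resolvent estimate (exploiting the interplay of $\mathcal{J}$ and $\mathcal{L}$) are the right ingredients and match the paper. The gap is at the pivot of your argument: you reduce the theorem to an $H^s$ semigroup growth bound $|e^{t\mathcal{J}\mathcal{L}(jm_0k_0)}w|_s\leq Ce^{(\sigma_0+\varepsilon)t}|w|_s$ and then assert this would follow from ``a Laplace transform and a resolvent estimate for $\mathrm{Re}\,\lambda>\sigma_0$.'' That inference is not automatic. The Lumer--Phillips argument in Lemma \ref{well posedness u_j} does give a $C_0$-semigroup on $H^s$, but with no control of its growth rate in terms of $\sigma_0$; and a resolvent estimate, even in a half-plane, does not by itself produce a pointwise-in-time semigroup bound---inverting the Laplace transform naturally yields an $L^2$-in-time statement, not an $L^\infty$-in-time one. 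To make your Duhamel argument run you would need a quantitative Gearhart--Pr\"uss-type step, which your proposal does not develop.

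The paper sidesteps the semigroup bound entirely. It proves the resolvent estimate (Theorem \ref{resolvent estimate}) only on the single vertical line $\gamma_0+i\tau$, with $\sigma_0<\gamma_0<\gamma$, and then Bessel--Parseval yields only the time-integrated bound
\begin{align*}
\int_0^T e^{-2\gamma_0 t}|u_j(t)|_s^2\,dt\leq Ce^{2(\gamma-\gamma_0)T}.
\end{align*}
The missing bridge from $L^2_t$ to $L^\infty_t$ is supplied by a direct $H^s$ energy estimate on \eqref{linear problem Fourier},
\begin{align*}
\frac{d}{dt}|u_j(t)|_s^2\leq C|u_j(t)|_s^2+Ce^{2\gamma t},
\end{align*}
obtained by the same coefficient-cancellation device used in the resolvent lemma; combining it with the integrated bound gives the pointwise conclusion $|u_j(t)|_s\leq Ce^{\gamma t}$. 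Your proposal is missing exactly this two-step upgrade, and it is the mechanism that lets the argument close without ever asserting a standalone $H^s$ semigroup growth bound.
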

By Fourier transforming with respect to $y$, we have
\begin{align}\label{linear problem Fourier}
\partial_tu_j=\mathcal{J}\mathcal{L}(jm_0k_0)u_j+\mathcal{J}F_j,\quad u_j|_{t=0}=0.\ \ \ \text{for}\ j=1\cdots, K,
\end{align}
where $u_j, F_j$ are the $j$th Fourier modes  in $y$ of $u$ and $F$ respectively. Thus the problem is equivalent to proving that if 
\begin{align}\label{fj estimate}
|F_j|_{V_{K}^{s-1}} \leq C_{K,s}e^{\gamma t},\ \gamma\geq 2\sigma_0,\ \ \ \text{for}\ j=1\cdots, K,
\end{align}
then
\begin{align}\label{uj estimate}
|u_j|_{V_K^s}\leq C_{K,s}e^{\gamma t},\ \ \ \text{for}\ j=1\cdots, K.
\end{align}
\begin{lem}[Existence of $u_j$]\label{well posedness u_j} For any $s\in \mathbb{R}$ , there exists a unique $u_j \in H^s(\mathbb{R})$ solving \eqref{linear problem Fourier}.
\end{lem}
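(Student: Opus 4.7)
The plan is to prove existence and uniqueness via a standard regularization-plus-energy-estimate scheme, then pass to the limit. Fix $k := jm_0k_0$ and rewrite \eqref{linear problem Fourier} as $\partial_t u_j = A(k)u_j + G(t)$, with $A(k) := \mathcal{J}\mathcal{L}(k)$ and $G(t) := \mathcal{J}F_j(t)$. A direct inspection of $A(k)$ shows that its only singular piece is $-(1-\partial_x^2)^{-1}\partial_x^2\bigl((c-\phi)u_x\bigr)$, which maps $H^s(\mathbb{R})$ into $H^{s-1}(\mathbb{R})$; the multiplication by $\phi''-3\phi-2\kappa+c$ composed with $\mathcal{J}$ is of order $0$, and the transverse piece $-k^2(1-\partial_x^2)^{-1}\partial_x^{-1}$ is bounded on every $H^s$ (the $\partial_x^{-1}$ is harmless because by the Hamiltonian structure $F_j$ and $u_j$ have a $\partial_x$-primitive, i.e. live in the range of $\partial_x$). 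So $A(k) : H^s \to H^{s-1}$ continuously.

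First I would parabolically regularize by setting
\begin{equation*}
\partial_t u^\varepsilon = A(k) u^\varepsilon - \varepsilon\, \partial_x^4 u^\varepsilon + G(t), \qquad u^\varepsilon|_{t=0}=0.
\end{equation*}
For each $\varepsilon > 0$, the generator $-\varepsilon\partial_x^4$ is sectorial on $H^s(\mathbb{R})$ and $A(k)$ is a lower-order perturbation, so classical analytic-semigroup theory (or, equivalently, a Galerkin truncation to Fourier modes $|\xi|\leq N$ together with an ODE argument in $N$) delivers a unique $u^\varepsilon \in C([0,T]; H^s)$ for any $T>0$.

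Next I would derive an $\varepsilon$-uniform $H^s$ estimate by pairing the regularized equation with $\Lambda^{2s}u^\varepsilon$, where $\Lambda := (1-\partial_x^2)^{1/2}$. The parabolic term contributes $-\varepsilon\,|\partial_x^2\Lambda^s u^\varepsilon|_0^2\leq 0$ and is discarded. The key structural observation for the singular term is the identity $(1-\partial_x^2)^{-1}\partial_x^2 = -\mathrm{Id} + (1-\partial_x^2)^{-1}$, which reduces the third-derivative operator to a first-order one with smooth coefficient $c-\phi$ that, by Proposition \ref{solitary wave property}, satisfies $c-\phi \geq 2\kappa > 0$ and whose derivatives decay exponentially. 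A Kato--Ponce-type commutator bound controls $[\Lambda^s, c-\phi]$ as an $H^{s-1}\to L^2$ operator, so the resulting contribution is bounded by $C_{k,s}|u^\varepsilon|_s^2$. The $0$-th order and $k^2$ pieces are trivially of the same form. For the forcing, $|\LA \Lambda^s G, \Lambda^s u^\varepsilon\RA| \leq |u^\varepsilon|_s |F_j|_{s-1}$ since $\mathcal{J}\colon H^{s-1} \to H^s$ is bounded. Gronwall yields
\begin{equation*}
|u^\varepsilon(t)|_s \leq C_{k,s}\int_0^t e^{C_{k,s}(t-\tau)} |F_j(\tau)|_{s-1}\, d\tau
\end{equation*}
uniformly in $\varepsilon > 0$.

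Finally I would send $\varepsilon \to 0^+$. The uniform $H^s$-bound combined with the equation itself gives $\partial_t u^\varepsilon$ uniformly bounded in $L^\infty_t H^{s-1}_{\mathrm{loc}}$; Aubin--Lions produces a subsequence converging strongly in $C_t H^{s-1}_{\mathrm{loc}}$, which is sufficient to pass to the distributional limit in every term of the linear equation. The limit $u_j$ inherits the $H^s$-bound and solves \eqref{linear problem Fourier}, and uniqueness is just the same energy estimate applied to the difference of two candidate solutions with vanishing data and forcing. The main technical obstacle is the one-derivative loss intrinsic to $A(k)$: closing the $H^s$-energy identity relies essentially on the structural reduction $\mathcal{J}\partial_x = (1-\partial_x^2)^{-1}\partial_x^2 = -\mathrm{Id} + (1-\partial_x^2)^{-1}$ and the strict positivity $c-\phi \geq 2\kappa$, without which the principal symbol would not reduce to an order-zero coefficient absorbable by commutators.
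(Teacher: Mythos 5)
Your proof is correct in outline, but it takes a genuinely different route from the paper's. The paper proves the lemma via the Lumer--Phillips theorem: after Duhamel, it reduces to generating a $C_0$-semigroup for the principal operator $\mathcal{A} = (c-\phi)\partial_x - (jm_0k_0)^2(1-\partial_x^2)^{-1}\partial_x^{-1}$ (the remaining pieces being bounded), and this requires two things: (i) quasi-dissipativity $\langle \mathcal{A}u,u\rangle_{H^s}\leq\omega|u|_s^2$, which is exactly the integration-by-parts cancellation you exploit (the $\tfrac12\phi'$ term and the skew-symmetry of $(1-\partial_x^2)^{-1}\partial_x^{-1}$); and (ii) a range condition, namely surjectivity of $\lambda-(\mathcal{A}-\omega)$, which the paper verifies by analysing the essential spectrum of the constant-coefficient limiting operator $c\partial_x-(jm_0k_0)^2(1-\partial_x^2)^{-1}\partial_x^{-1}-\omega$. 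You replace step (ii) by parabolic regularization with $-\varepsilon\partial_x^4$, a uniform-in-$\varepsilon$ Gronwall estimate, and Aubin--Lions compactness; the decisive structural identity $(1-\partial_x^2)^{-1}\partial_x^2 = -\mathrm{Id}+(1-\partial_x^2)^{-1}$ (the paper's \eqref{important transformation}/\eqref{JL decomposition}) and the $\tfrac12\phi'$ cancellation appear in both. What your route buys is that it avoids the resolvent/essential-spectrum verification entirely, and it matches the regularization-plus-Aubin--Lions strategy the paper itself uses for the nonlinear error equation in Appendix \ref{appendix pf}; what the paper's route buys is that semigroup generation yields uniqueness for free, whereas in your scheme uniqueness needs the $\varepsilon=0$ energy identity to be justified by a separate mollification (standard, but an extra step). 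One small inaccuracy: the $H^s$-energy identity at this level does \emph{not} actually use the strict lower bound $c-\phi\geq 2\kappa$---only boundedness of $\phi'$ and its decay enter after integrating by parts; positivity of $c-\phi$ becomes essential only later, in the coercivity step \eqref{lyapunov type} of the resolvent estimate. This remark does not affect the validity of your argument.
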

\begin{proof}
The proof of Lemma \ref{well posedness u_j} is postponed in Appendix \ref{appendix lem}.
\end{proof}

To prove \eqref{uj estimate}, we first give a resolvent estimate. Take $\gamma_0$ such that  $\sigma_0<\gamma_0<\gamma$. For $T>0$, we introduce 
\begin{align}
G=0,\ t<0;\qquad G=0,\ t>T; \qquad G=F_j,\ t\in [0,T],    
\end{align}
then \eqref{linear problem Fourier} can be written as 
\begin{align*}
\partial_t\tilde{u}_j=\mathcal{J}\mathcal{L}(jm_0k_0)\tilde{u}_j+\mathcal{J}G,\quad \tilde{u}_j|_{t=0}=0.    
\end{align*}
where $\tilde{u}_j $ is the extension of $u_j$ such that 
\begin{align*}
\tilde{u}_j|_{0\leq t\leq T}=u_j,\quad \tilde{u}_j|_{t<0}=0.
\end{align*}
Then the Laplace transform yields that 
\begin{align}\label{resolvent equation}
(\gamma_0+i\tau) w=\mathcal{J}\mathcal{L}(jm_0k_0)w+\mathcal{J}H,   
\end{align}
where
\begin{align*}
w=\int_{t\geq 0}e^{-(\gamma_0+i\tau)}\tilde{u}_jdt,\qquad
H=\int_{t\geq 0}e^{-(\gamma_0+i\tau)}Gdt.    
\end{align*}
Here  for simplicity, we denote $w$ as the  Laplace transform of $\tilde{u}_j$ for each given $j$.
\begin{theorem}[Resolvent estimate]\label{resolvent estimate}
Let $s\geq 1$. Let $w$ be the solution of \eqref{resolvent equation}, then there exists a constant $C(s,\gamma_0,K)$ such that  for every $\tau$, we have the estimate
\begin{align}
|w|_{s}^2\leq C(s,\gamma_0,K)|H|^2_{s-1}.
\end{align}
\end{theorem}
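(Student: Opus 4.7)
The approach is an energy-estimate scheme that exploits three structural facts: $\mathcal{L}(k)$ is self-adjoint with real coefficients, $\mathcal{J}$ is skew-adjoint (its Fourier symbol is purely imaginary), and $\mathcal{L}(jm_0 k_0)$ is strictly positive for $1\leq j\leq K$ by the remark following Lemma \ref{RT JMPA 2008}. Because only finitely many values of $k = j m_0 k_0$ appear, coercivity of $\mathcal{L}(k)$ is uniform in $j$. The proof proceeds in two stages: a base case at $s=1$ using an $\mathcal{L}(k)$-weighted energy, and induction on $s$ using a modified energy.

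For the base case, I would pair \eqref{resolvent equation} with $\mathcal{L}(k)\bar w$ in $L^2$. Self-adjointness makes $\langle w,\mathcal{L}(k)w\rangle$ real and skew-adjointness makes $\langle \mathcal{J}\mathcal{L}(k)w,\mathcal{L}(k)w\rangle$ purely imaginary, so taking real parts and substituting $\mathcal{J}\mathcal{L}(k)w = (\gamma_0+i\tau)w-\mathcal{J}H$ back into the right-hand side yields
\begin{equation*}
\gamma_0\,\langle w,\mathcal{L}(k)w\rangle = -\gamma_0\operatorname{Re}\langle H,w\rangle - \tau\operatorname{Im}\langle H,w\rangle,
\end{equation*}
the $\langle H,\mathcal{J}H\rangle$ term being purely imaginary. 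The spurious $\tau$-dependent right-hand side is tamed by using the equation itself together with the one-derivative smoothing $|\mathcal{J}F|_s\leq|F|_{s-1}$ to bound $|\tau|\,|w|_0 \lesssim |w|_1 + |H|_0$. Uniform coercivity $\langle w,\mathcal{L}(k)w\rangle \gtrsim |w|_1^2$ and Young's inequality then give $|w|_1\lesssim|H|_0$.

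For the inductive step, since $[\partial_x^{s-1},\mathcal{J}]=0$, applying $\partial_x^{s-1}$ to \eqref{resolvent equation} produces the same equation for $\partial_x^{s-1}w$ with forcing $\partial_x^{s-1}H+[\partial_x^{s-1},\mathcal{L}(k)]w$. A naive invocation of the base-case estimate loses a derivative, because the quasilinear term $\phi\,\partial_x^2$ inside $\mathcal{L}(k)$ makes $[\partial_x^{s-1},\mathcal{L}(k)]$ of principal order $s$. I would circumvent this by running the pairing argument against a modified energy
\begin{equation*}
E_s(w) = \langle \partial_x^{s-1}w,\mathcal{L}(k)\partial_x^{s-1}w\rangle + (\text{lower-order correction depending on }\phi),
\end{equation*}
with the correction engineered so that the top-order commutator contribution is antisymmetric and drops out upon taking the real part---the same cancellation mechanism used at the base level, now applied to $\partial_x^{s-1}$ in place of $\mathcal{J}$. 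What remains is of order $H^{s-1}$ and is absorbed by the inductive hypothesis, yielding $|w|_s \lesssim |H|_{s-1}$.

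The main obstacle is constructing the correction in the modified energy at every order $s$ so that the quasilinear top-order commutator cancels cleanly, while the nonlocal term $-k^2\partial_x^{-2}$ (which commutes with $\partial_x^{s-1}$ but couples to the variable-coefficient part of $\mathcal{L}(k)$ through the coercivity estimate) is handled consistently and uniformly in $\tau$ and in the finite range of $k = jm_0 k_0$. This is precisely where the paper's invocation of the ``smoothing property'' of $\mathcal{J}$ and the ``new cancellation mechanism'' from the nonlinearity's special structure both play essential roles.
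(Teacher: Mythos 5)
Your base case at $s=1$ rests on a false premise. You claim coercivity of $\mathcal{L}(jm_0k_0)$ for $1\leq j\leq K$ by citing the remark after Lemma~\ref{RT JMPA 2008}, but that remark concerns $\mathcal{L}(mk_0)$ for $\abs{mk_0}\geq K$ (the regime with \emph{no} unstable modes). For $j$ near the unstable transverse frequency---in particular at $j=1$, i.e. $k=m_0k_0$---the operator $\mathcal{L}(jm_0k_0)=H_c-k^2\partial_x^{-2}$ is \emph{not} positive definite: it retains the simple negative eigenvalue of $H_c$ unless $k$ is large enough, and indeed it cannot be coercive at the unstable frequency, since if it were then $\mathcal{L}(k)^{1/2}\mathcal{J}\mathcal{L}(k)^{1/2}$ would be a well-defined skew-adjoint operator, whose spectrum is purely imaginary, contradicting the existence of the unstable eigenvalue $\sigma_0>0$ established in Proposition~\ref{unstable eigenmode}. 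So the inequality ``$\LA w,\mathcal{L}(k)w\RA\gtrsim\abs{w}_1^2$'' that your whole base case hinges on is unavailable precisely where it matters, and the identity $\gamma_0\LA w,\mathcal{L}(k)w\RA=-\gamma_0\operatorname{Re}\LA H,w\RA-\tau\operatorname{Im}\LA H,w\RA$ by itself gives no lower bound on $\abs{w}_1$.

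The paper's actual proof avoids this by splitting into $\abs{\tau}\geq M$ and $\abs{\tau}\leq M$ (Lemmas~\ref{tau large} and~\ref{tau small}). For $\abs{\tau}$ large it does \emph{not} use coercivity of $\mathcal{L}(k)$; instead it writes $\mathcal{L}(jm_0k_0)=L-(jm_0k_0)^2\partial_x^{-2}$ and spectrally decomposes $w=\alpha\phi_{-1}+\beta\phi_0+w_\perp$ along the negative/kernel/positive subspaces of $L=H_c$, then uses the largeness of $\abs{\tau}$ to control the coefficients $\alpha,\beta$ in the non-coercive directions (equations \eqref{alpha beta 1}--\eqref{alpha beta 4}). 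For $\abs{\tau}$ small it abandons the energy method entirely and instead rewrites \eqref{resolvent equation} as a fourth-order ODE system $V'=A(x,\sigma,j)V+\partial_x^2 H$ and invokes an Evans-function-type resolvent lemma, checking that the limiting matrix $A_\infty(\sigma,j)$ has no purely imaginary eigenvalues for $\operatorname{Re}\sigma>0$. Your proposal contains neither of these two mechanisms, and without them the argument does not close. Your sketch of the inductive step---a weighted correction $r_{s+1}(x)$ chosen to cancel the top-order $\phi'$ commutator term, together with the identity \eqref{important transformation} that exposes the transport term $(c-\phi)\partial_x$---is in fact close in spirit to what the paper does, but it is built on a base case that does not hold.
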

We will split the proof of the above theorem into Lemma \ref{tau large} and Lemma \ref{tau small}. 
\begin{lem}\label{tau large}
There exist $M>0$ and $C(s,\gamma_0,K)$ such that  for $|\tau|\geq M$, we have 
\begin{align}\label{tau large estimate}
|w|_{s}^2\leq C(s,\gamma_0,K)|H|^2_{s-1}.
\end{align}
\end{lem}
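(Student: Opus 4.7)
The plan is to extract a first-order transport operator as the principal part of $\mathcal{J}\mathcal{L}(k)$, treat the remainder as a smoothing perturbation, and derive the resolvent bound in the large-$|\tau|$ regime via a constant-coefficient Fourier analysis after a change of variables. Using the identity $(1-\partial_x^2)^{-1}\partial_x^2 = (1-\partial_x^2)^{-1}-1$, I would first rewrite
$$\mathcal{J}\mathcal{L}(k)\, w = (c-\phi) w_x + \mathcal{R}(k) w,$$
where $\mathcal{R}(k)$ collects the pieces $-(1-\partial_x^2)^{-1}[(c-\phi)w_x]$, $\mathcal{J}[(\phi''-3\phi-2\kappa+c)w]$, and $-k^2(1-\partial_x^2)^{-1}\partial_x^{-1}w$, and is smoothing of order at least one on $H^s(\R)$, uniformly in $|k|\leq K m_0 k_0$ (the third piece has Fourier symbol of order $\xi^{-3}$ and $L^2$-norm $O(k^2)$). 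The resolvent equation \eqref{resolvent equation} then reads
$$L_\tau w := \bigl[(\gamma_0+i\tau) - (c-\phi)\partial_x\bigr] w = \mathcal{R}(k) w + \mathcal{J} H.$$

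Next I would introduce the change of variables $\xi=\int_0^x dy/(c-\phi(y))$, which is bi-Lipschitz on $\R$ since $c-\phi$ is bounded above and below away from zero by Proposition \ref{solitary wave property}. In the $\xi$-coordinate, $L_\tau$ becomes the constant-coefficient operator $(\gamma_0+i\tau)-\partial_\xi$, whose inverse has Fourier symbol $[\gamma_0+i(\tau-\omega)]^{-1}$. A frequency splitting at $|\omega-\tau|=|\tau|/2$, combined with $(1+\omega^2)\geq\tau^2/4$ in the resonant region, yields the Sobolev bound
$$|L_\tau^{-1}F|_s^2 \leq \frac{C}{\tau^2}\bigl(|F|_s^2+\gamma_0^{-2}|F|_{s+1}^2\bigr)\quad\text{for }|\tau|\geq M(s,\gamma_0),$$
while uniformly in $\tau$ one has $|L_\tau^{-1}F|_s\leq C|F|_s/\gamma_0$. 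Applied to $F=\mathcal{R}(k)w+\mathcal{J}H$: the smoothing of $\mathcal{R}(k)$ gives $|\mathcal{R}(k)w|_s\leq C|w|_{s-1}$ and $|\mathcal{R}(k)w|_{s+1}\leq C|w|_s$, while $\mathcal{J}H$ lies only in $H^s$ with $|\mathcal{J}H|_s\leq C|H|_{s-1}$; this second piece must therefore be handled by the weaker uniform bound. Combining,
$$|w|_s \leq \frac{C}{|\tau|}\bigl(|w|_{s-1}+\gamma_0^{-1}|w|_s\bigr)+\frac{C}{\gamma_0}|H|_{s-1}.$$
For $|\tau|\geq M$ with $M=M(s,\gamma_0,K)$ large enough that $C/(|\tau|\gamma_0)\leq 1/2$, the $|w|_s$ term on the right is absorbed, and induction on $s$ (base case handled by the same argument at $L^2$ together with $|w|_{-1}\leq|w|_0$) closes the estimate to $|w|_s^2\leq C(s,\gamma_0,K)|H|_{s-1}^2$.

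The principal obstacle is that $\|L_\tau^{-1}\|_{L^2\to L^2}=1/\gamma_0$ does not improve as $|\tau|\to\infty$; the $O(1/|\tau|)$ gain arises only when the source carries extra regularity beyond $H^s$. Since $\mathcal{J}H$ has no better regularity than $H^s$ (we only control $|H|_{s-1}$), this piece of the source is bounded by the weaker uniform estimate, whereas the smoothing remainder $\mathcal{R}(k)w$ does produce the $1/|\tau|$ gain and thereby enables absorption of the $w$-dependent right-hand side. A secondary technical point is that the nonlocal term $-k^2(1-\partial_x^2)^{-1}\partial_x^{-1}$ in $\mathcal{R}(k)$ has norm growing like $k^2$, which is the origin of the $K$-dependence in the final constant.
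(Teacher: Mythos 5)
Your route is genuinely different from the paper's. The paper first handles $s=1$ by pairing the resolvent equation against $\mathcal{L}(jm_0k_0)w$ and splitting $w$ along the spectral decomposition of the CH operator $L$ into $\alpha\phi_{-1}+\beta\phi_0+w_\perp$, extracting the large-$|\tau|$ gain from the algebraic relations \eqref{alpha beta 1}--\eqref{alpha beta 4}; then for $s+1$ it pairs against $(-1)^{s+1}\partial_x^{2s+2}w+(-1)^{s+1}\partial_x^{s+1}(r_{s+1}\partial_x^{s+1}w)$ with the weight $r_{s+1}$ chosen by solving a Bernoulli ODE so as to cancel the bad commutator $-(s+\tfrac12)\phi'$. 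You instead straighten the transport term by the change of variable $\xi=\int_0^x\!dy/(c-\phi)$, diagonalize $L_\tau$ by Fourier analysis, and gain $1/|\tau|$ by frequency splitting, treating the rest of $\mathcal{J}\mathcal{L}(k)$ as a smoothing perturbation $\mathcal{R}(k)$. Conceptually this is attractive and avoids both the spectral decomposition and the weight $r_{s+1}$.

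However, there is a genuine gap in the perturbation step: your bound requires $\mathcal{R}(k)\colon H^{s-1}(\R)\to H^s(\R)$ to be bounded with norm $O(k^2)$, and this is false for the nonlocal piece $-k^2(1-\partial_x^2)^{-1}\partial_x^{-1}$. Its Fourier symbol is $-k^2/\bigl((1+\xi^2)(i\xi)\bigr)$, which does decay like $|\xi|^{-3}$ at infinity but blows up like $|\xi|^{-1}$ as $\xi\to 0$; the operator is therefore not bounded on $L^2$ at all, let alone smoothing, and the zero-mean restriction $\hat w(0)=0$ does not rescue it (e.g.\ $\hat w(\xi)\sim 1/\log|\xi|$ near $0$ lies in the zero-mean class yet $\hat w(\xi)/\xi\notin L^2$). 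Consequently the right-hand side in your fixed-point inequality $|w|_s\leq \tfrac{C}{|\tau|}\bigl(|w|_{s-1}+\gamma_0^{-1}|w|_s\bigr)+\tfrac{C}{\gamma_0}|H|_{s-1}$ is not legitimate, and the absorption argument does not close. Note that $L_\tau^{-1}$ does not preserve the zero-mean class either (integrating $L_\tau w$ against $1$ produces $\int \phi' w\,dx$), so you also cannot keep the iterate inside a better class. The paper's energy method is structured precisely to avoid this: at the level $s=1$ the pairing with $\mathcal{L}(jm_0k_0)w$ puts $+(jm_0k_0)^2\left|\partial_x^{-1}w\right|_0^2$ on the \emph{coercive} side of \eqref{inner product resolvent}, so this quantity is controlled rather than needing to be assumed bounded, and for higher $s$ the test function $(-1)^{s+1}\partial_x^{2s+2}w+\dots$ is integrated by parts $s+1$ times against $(1-\partial_x^2)^{-1}\partial_x^{-1}w$, turning it into $(1-\partial_x^2)^{-1}\partial_x^{s}w$ and eliminating the antiderivative entirely. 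To repair your argument you would need a separate a priori bound on $\left|\partial_x^{-1}w\right|_0$ in terms of $|H|$, which your framework does not supply; the natural way to get it is exactly the coercive pairing the paper uses.
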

\begin{proof}
First prove the case when $s=1$. Write
\begin{align*}
\mathcal{L}(jm_0k_0)=L-(jm_0k_0)^2\partial_x^{-2}    
\end{align*} 
where
\begin{align*}
L=-\partial_x((c-\phi)\partial_x)+\LC\phi''-3\phi-2\kappa+c\RC.
\end{align*}
Then we decompose
\begin{align}\label{w decompose} 
w=\alpha\phi_{-1}+\beta\phi_0+w_{\perp}    
\end{align}
such that  
\begin{align}\label{CH decomposition}
L\phi_{-1}=\mu\phi_{-1},\ \mu<0;\quad L\phi_0=0;\quad \LA  Lw_{\perp},w_{\perp}\RA \geq c_{\perp}w_{\perp}^2,\ c_{\perp}>0.
\end{align}
By Theorem \ref{CH linearized operator}, such a decomposition is available. Taking the inner product of \eqref{resolvent equation} with
$\mathcal{L}(jm_0k_0)$ yields that 
\begin{equation}\label{inner product resolvent}
\gamma_0\LC\LA  w,Lw\RA +(jm_0k_0)^2\left|\partial^{-1}_xw\right|_0^2\RC=\text{Re}\LC\LA \mathcal{J}H,Lw\RA + \LA \mathcal{J}H,(jm_0k_0)^2\partial_x^{-2}w\RA \RC.
\end{equation}
By \eqref{CH decomposition} and \eqref{inner product resolvent}, we have
\begin{align*}
\gamma_0\LC\mu\alpha|\phi_{-1}|_0^2+c_{\perp}|w_{\perp}|_0^2+\LC jm_0k_0\RC^2\left|\partial_x^{-1}w\right|_0^2\RC\leq C\LC|H|_0|w|_{1}+(jm_0k_0)^2|H|_{-2}\left|\partial_x^{-1}w\right|_0\RC,
\end{align*}
then
\begin{align}\label{alpha beta 1}
|w_{\perp}|_0^2+(jm_0k_0)^2\left|\partial_x^{-1}w\right|_0^2\leq C\LC|\alpha|^2+|H|_{-2}^2+|H|_0|w|_1\RC.
\end{align}
Taking the inner product of \eqref{resolvent equation} with $\phi_{-1}$ and $\phi_0$ respectively, we have 
\begin{align*}
\begin{split}
&(\gamma_0+i\tau)\alpha=-\LA  w,L\mathcal{J}\phi_{-1}\RA -(jm_0k_0)^2\LA \mathcal{J}\partial_x^{-2}w,\phi_{-1}\RA +\LA \mathcal{J}H,\phi_{-1}\RA, \\
&(\gamma_0+i\tau)\beta=-\LA  w,L\mathcal{J}\phi_{0}\RA -(jm_0k_0)^2\LA \mathcal{J}\partial_x^{-2}w,\phi_{0}\RA +\LA \mathcal{J}H,\phi_{0}\RA.   
\end{split}
\end{align*}
Rewriting $w$ as \eqref{w decompose} for the first term on the right-hand side and combining the above two equations, we have 
\begin{align}\label{alpha beta 2}
\LC\gamma_0+|\tau|\RC\LC|\alpha|+|\beta|\RC\leq C\LC|\alpha|+|\beta|+|w_{\perp}|_0+(jm_0k_0)^2\left|\partial_x^{-1}w\right|_0+|H|_{-2}\RC.  
\end{align}
Multiplying $|\alpha|+|\beta|$ to \eqref{alpha beta 2} and by Cauchy-Schwarz inequality it follows that
\begin{align*}
(\gamma_0+|\tau|-C)\LC|\alpha|^2+|\beta|^2\RC\leq C \LC|w_{\perp}|_0^2+(jm_0k_0)^2\left|\partial_x^{-1}w\right|_0^2+|H|_{-2}^2\RC,    
\end{align*}
which is a good estimate when $|\tau|$ is large. For a sufficiently large constant $B$, consider $B$\eqref{alpha beta 1}+\eqref{alpha beta 2}:
\begin{align*}
\begin{split}
&(B-C)\LC|w_{\perp}|_0^2+(jm_0k_0)^2\left|\partial_x^{-1}w\right|_0^2\RC+
\LC\gamma_0+|\tau|-BC-C\RC\LC|\alpha|^2+|\beta|^2\RC\\
&\quad\leq\ BC\LC |H|_{-2}^2+|H|_0|w|_{1}\RC.
\end{split}
\end{align*}
When $|\tau|> C+BC$ we have 
\begin{align}\label{alpha beta 3}
|w|_0^2+(jm_0k_0)^2\left|\partial_x^{-1}w\right|_0^2\leq C\LC|H|_0|w|_{1}+|H|_{-2}^2\RC.
\end{align}
On the other hand,
\begin{align}\label{lyapunov type}
\begin{split}
\LA  w,Lw\RA &=\LA  w, -\partial_x\LC(c-\phi)\partial_xw\RC+\LC\phi''-3\phi-2\kappa+c\RC w\RA \\
& \geq a_1|w|_1^2+\LA \LC\phi''-3\phi-2\kappa+c\RC w,w\RA     
\end{split}
\end{align}
for $a_1>0$. Replacing $\LA  w,Lw\RA $ in \eqref{inner product resolvent} with \eqref{lyapunov type}, we have 
\begin{align}\label{alpha beta 4}
|w|_{1}^2+(jm_0k_0)^2\left|\partial_x^{-1}w\right|_0^2\leq C\LC|w|_0^2+|H|_{-2}^2+|H|_0|w|_{1}\RC.
\end{align}
Combining \eqref{alpha beta 3} and \eqref{alpha beta 4} yields
\begin{align*}
|w|_{1}^2+(jm_0k_0)^2\left|\partial_x^{-1}w\right|_0^2\leq C\LC|H|_{-2}^2+|H|_0|w|_{1}\RC.
\end{align*}
Consequently, 
\begin{align*}
  |w|_{1}^2+(jm_0k_0)^2\left|\partial_x^{-1}w\right|_0^2\leq C|H|_0^2,
\end{align*}
which proves the case $s=1$.

For higher order estimates, \eqref{resolvent equation} can be written as
\begin{align}\label{high order equation}
\begin{split}
(\gamma_0+i\tau)w=&\LC 1-\partial_x^2\RC ^{-1}\LC -\partial_x^2\RC \LB(c-\phi)w_x\RB\\
&+\LC 1-\partial_x^2\RC ^{-1}\partial_x\LB\LC\phi''-3\phi-2\kappa+c\RC w\RB\\
&-(jm_0k_0)^2\LC 1-\partial_x^2\RC ^{-1}\partial_x^{-1}w+\LC 1-\partial_x^2\RC ^{-1}\partial_xH.   
\end{split}
\end{align}
For the first term on the right-hand side, we can rewrite it as
\begin{align}\label{important transformation}
\LC 1-\partial_x^2\RC ^{-1}\LC -\partial_x^2\RC \LC (c-\phi)w_x\RC =(c-\phi)w_x-\LC 1-\partial_x^2\RC ^{-1}\LC (c-\phi)w_x\RC.     
\end{align}
By induction, assume \eqref{tau large estimate} is true for $s$. We prove that it is true for $s+1$. In the rest of this proof, we denote $O(\partial_x^k w)$ as generic polynomial differential operator on $w$ with highest degree $k$.

Take the inner product of \eqref{high order equation} with 
\begin{align*}
(-1)^{s+1}\partial_x^{2s+2}w+(-1)^{s+1}\partial_x^{s+1}\LC r_{s+1}(x)\partial_x^{s+1}w\RC,
\end{align*}
where $r_{s+1}(x)$ is bounded which will be determined later.
We have 
\begin{align}
&\text{Re}\LA (\gamma_0+i\tau) w,(-1)^{s+1}\partial_x^{2s+2}w\RA  \label{rs 0}\\
&\quad=\ \gamma_0|w|_{s+1}^2, \notag\\
&\text{Re}\LA (c-\phi)w_x,(-1)^{s+1}\partial_x^{2s+2}w\RA  \label{rs 1}\\
&\quad=\ \text{Re}\LA \LC\partial^{s+1}(c-\phi)w_x\RC,\partial_x^{s+1}w\RA  \notag\\
&\quad=\ \text{Re}\LA  (c-\phi)\partial_x^{s+2}w-(s+1)\phi'\partial_x^{s+1}w+O(\partial_x^sw),\partial_x^{s+1}w\RA \notag\\
&\quad=\ \LA \LC\frac{1}{2}-(s+1)\RC\phi'\partial_x^{s+1}w+O(\partial_x^sw),\partial_x^{s+1}w\RA \notag\\
&\quad=\ -\LA \LC s+\frac{1}{2}\RC\phi'\partial_x^{s+1}w+O(\partial_x^sw),\partial_x^{s+1}w\RA ,\notag\\
\text{and}\ \ \ \notag\\
&\text{Re}\LA (\gamma_0+i\tau)w,(-1)^{s+1}\partial_x^{s+1}\LC r_{s+1}(x)\partial_x^{s+1}w\RC\RA  \label{rs 2}\\
&\quad=\ \gamma_0\LA  r_{s+1}(x)\partial_x^{s+1}w,\partial_x^{s+1}w\RA ,      
\notag\\
&\text{Re}\LA  (c-\phi)w_x,(-1)^{s+1}\partial_x^{s+1}\LC r_{s+1}(x)\partial_x^{s+1}w\RC\RA \label{rs 3}\\
&\quad=\ \text{Re}\LA  r_{s+1}(x)\partial_x^{s+1}\LC(c-\phi)w_x\RC,\partial_x^{s+1}w\RA \notag\\
&\quad=\ \text{Re}\Big<  r_{s+1}(x)\big( ( c-\phi)\partial_x^{s+2}w-(s+1)\phi'r_{s+1}(x)\partial_x^{s+1}w\notag\\
&\quad\quad+O(\partial_x^sw)\big),\partial_x^{s+1}w\Big> \notag\\
&\quad=\ \text{Re}\LA  \LC\frac{1}{2}\LC\phi'r_{s+1}(x)-r_{s+1}'(x)(c-\phi)\RC-(s+1)\phi'r_{s+1}(x)\RC\partial_x^{s+1}w \right. \notag\\
&\quad\quad +O(\partial_x^sw),\partial_x^{s+1}w\Big> ,\notag\\
\text{and}\ \ \ \notag\\
&\LA \LC 1-\partial_x^2\RC^{-1}((c-\phi)w_x),(-1)^{s+1}\partial_x^{2s+2}w+(-1)^{s+1}\partial_x^{s+1}\LC r_{s+1}(x)\partial_x^{s+1}w\RC\RA  \label{rs 4}\\
&\quad=\ \LA  O(\partial_x^sw),\partial_x^{s+1}w\RA ,\notag\\   
&\LA \LC 1-\partial_x^2\RC^{-1}\partial_x\LC\LC\phi''-3\phi-2\kappa+c\RC w\RC,(-1)^{s+1}\partial_x^{2s+2}w+(-1)^{s+1}\partial_x^{s+1}\LC r_{s+1}(x)\partial_x^{s+1}w\RC\RA \notag\\
&\quad=\ \LA  O(\partial_x^sw),\partial_x^{s+1}w\RA ,\notag\\
&\LA  (jm_0k_0)^2\LC 1-\partial_x^2\RC^{-1}\partial_x^{-1}w,(-1)^{s+1}\partial_x^{2s+2}w+(-1)^{s+1}\partial_x^{s+1}\LC r_{s+1}(x)\partial_x^{s+1}w\RC\RA \notag\\
&\quad=\ \LA  O(\partial_x^{s-1}w),\partial_x^{s+1}w\RA ,\notag\\
&\LA  \LC 1-\partial_x^2\RC^{-1}\partial_xH,(-1)^{s+1}\partial_x^{2s+2}w+(-1)^{s+1}\partial_x^{s+1}\LC r_{s+1}(x)\partial_x^{s+1}w\RC\RA \notag\\
&\quad=\ \LA  O(\partial_x^sH),\partial_x^{s+1}w\RA. \notag
\end{align}
We want to use $r_{s+1}(x) $ to eliminate $-(s+\frac{1}{2})\phi'$ in \eqref{rs 1} with \eqref{rs 2}, \eqref{rs 3}. On the other hand, since $\phi'\rightarrow 0$ when $|x|\rightarrow \infty$, by \eqref{rs 1}, there exists $A>0$ such that  $\gamma_0+(s+\frac{1}{2})\phi'>0$ when $|x|>A$. Then we want $r_{s+1}(x)$ to satisfy when $x>-A$, 
\begin{align}\label{rs equation}
\begin{split}
-\LC s+\frac{1}{2} \RC \phi' -\gamma_0r_{s+1}(x) & +\frac{1}{2}\LC\phi'r_{s+1}(x)-r_{s+1}'(x)(c-\phi)\RC\\
&-(s+1)\phi'r_{s+1}(x)=0, \end{split}
\end{align}
and when $x<-A$
\begin{align}
\begin{split}
-\gamma_0r_{s+1}(x) & +\frac{1}{2}(\phi'r_{s+1}(x)-r_{s+1}'(x)(c-\phi))\\ &-(s+1)\phi'r_{s+1}(x) \leq  \gamma_0+(s+\frac{1}{2})\phi'.     
\end{split}
\end{align}
One choice could be $r_{s+1}(x)=0\ \text{when}\ x\leq -A$ and $r_{s+1}(x)$ satisfies \eqref{rs equation} when $x>-A$. Note that \eqref{rs equation} can be written in a form of Bernoulli equation:
\begin{align*}
r_{s+1}'(x)+\frac{2\gamma_0+(2s+1)\phi'}{c-\phi}r_{s+1}(x)=-(2s+1)\frac{\phi'}{c-\phi}.   
\end{align*}
So when $x>-A$  \\
\begin{align*}
r_{s+1}(x)=-(2s+1)e^{-\int_{-A}^x\frac{2\gamma_0+(2s+1)\phi'}{c-\phi}}
\int_{-A}^x  e^{\int_{-A}^t\frac{2\gamma_0+(2s+1)\phi'}{c-\phi}}\frac{\phi'}{c-\phi}dt
\end{align*}
and $r_{s+1}(s)$ is bounded. Indeed, when $x\rightarrow +\infty$, $\frac{2\gamma_0+(2s+1)\phi'}{c-\phi}$ is positive, and the forcing term $-(2s+1)\frac{\phi'}{c-\phi}\rightarrow 0$, which will prevent $\abs{r_{s+1}(x)}\rightarrow \infty$.

So by \eqref{rs 0}-\eqref{rs 4} and \eqref{rs equation} 
\begin{align*}
\gamma_0|w|_{s+1}^2=\text{Re}\LC\LA  O(\partial_x^sw),\partial_x^{s+1}w\RA +\LA  O(\partial_x^sH),\partial_x^{s+1}w\RA \RC.    
\end{align*}
Since  $|w|_{k}$ is bounded by $|w|_{s+1}$ and $|w|_1$ for $1<k\leq s$, by Cauchy-Schwartz inequality
\begin{align*}
|w|_{s+1}^2\leq C|H|_s^2,    
\end{align*}
which proves the lemma.
\end{proof}
\begin{lem}\label{tau small}
For $|\tau|\leq M$, we have 
\begin{align*}
|w|_{s}^2\leq C(s,\gamma_0,K,M)|H|^2_{s-1}.
\end{align*}
\end{lem}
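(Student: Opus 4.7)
The plan is to exploit the fact that $\gamma_0+i\tau$ remains in the resolvent set of $\mathcal{J}\mathcal{L}(jm_0k_0)$ for every $\tau\in\R$ and every $1\leq j\leq K$, and then to promote pointwise-in-$\tau$ resolvent bounds into a uniform bound on the compact set $|\tau|\leq M$. The higher-$s$ estimate will then be recovered by rerunning the induction from the proof of Lemma \ref{tau large}, which is actually insensitive to the size of $|\tau|$.

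The first task is the spectral claim. For the essential spectrum I would follow the Weyl-type argument already used for Proposition \ref{unstable eigenmode}: passing to the limiting operator as $|x|\to\infty$, where $\phi,\phi',\phi''\to 0$, the operator $\mathcal{J}\mathcal{L}(jm_0k_0)$ becomes the Fourier multiplier with symbol $\frac{i\bigl[c\xi^{4}+(c-2\kappa)\xi^{2}+(jm_0k_0)^{2}\bigr]}{\xi(1+\xi^{2})}$, whose range lies on $i\R$ and is therefore disjoint from $\{\gamma_0+i\tau:\tau\in\R\}$ since $\gamma_0>0$. For the point spectrum, since $\sigma_0$ is the most unstable eigenvalue across all admissible transverse frequencies, every eigenvalue of $\mathcal{J}\mathcal{L}(jm_0k_0)$ has real part at most $\sigma_0<\gamma_0$, so $\gamma_0+i\tau$ is not an eigenvalue either; Lemma \ref{well posedness u_j} then supplies a unique solution $w$ for each $\tau$.

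Having placed $\gamma_0+i\tau$ in the resolvent set, I would obtain the $s=1$ estimate by contradiction. Suppose no uniform constant $C$ exists; extract sequences $\tau_n\to\tau_*\in[-M,M]$, $|w_n|_1=1$ and $|H_n|_0\to 0$, and use the exponential decay of $\phi$ given by Proposition \ref{solitary wave property} to localize the variable-coefficient part of $\mathcal{L}(jm_0k_0)$, Rellich compactness on bounded intervals, and the far-field symbolic ellipticity to control tails; one then passes to a nonzero strong $L^2_{\mathrm{loc}}$ limit $w_*$ solving the homogeneous resolvent equation $(\gamma_0+i\tau_*)w_*=\mathcal{J}\mathcal{L}(jm_0k_0)w_*$, which contradicts the spectral claim. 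To bootstrap to higher $s$ I would rerun the induction from Lemma \ref{tau large}: inspection of its $s\Rightarrow s+1$ step shows that the weighted commutator/energy identity with the weight $r_{s+1}(x)$ depends only on the structure of $\mathcal{J}\mathcal{L}$ and on $\gamma_0$, not on the size of $|\tau|$, so starting from the $s=1$ bound yields $|w|_s^2\leq C(s,\gamma_0,K,M)|H|_{s-1}^2$ as claimed.

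The main obstacle I anticipate is the tail control in the contradiction argument, since $\mathcal{J}\mathcal{L}(jm_0k_0)$ is not compactly resolvent on the whole line $\R$. My proposed remedy is to split $w_n$ via a spatial cutoff into a localized piece treated by Rellich compactness and a far-field piece handled by the purely imaginary limiting symbol (which is bounded away from $\gamma_0+i\tau$ uniformly for $|\xi|$ in any annulus and degenerates only at $\xi\to 0$, where the explicit Fourier inversion plus the smallness of $|H_n|_0$ provides the decay needed to drive the tails to zero along the subsequence).
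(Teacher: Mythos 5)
Your proof takes a genuinely different route from the paper's. The paper proceeds by applying $(1-\partial_x^2)\partial_x$ to \eqref{resolvent equation}, converting it into a fourth-order ODE in $x$ which is then recast as a first-order system $\frac{dV}{dx}=A(x,\sigma,j)V+\partial_x^2 H$. It then invokes an exponential-dichotomy (Evans-function) lemma from \cite{Rousset-Tzvetkov JMPA 2008}: since $A(x,\sigma,j)\to A_\infty(\sigma,j)$ exponentially as $|x|\to\infty$ and the characteristic polynomial $c\lambda^4-\sigma\lambda^3-(c-2\kappa)\lambda^2+\sigma\lambda+(jm_0k_0)^2$ has no purely imaginary roots for $\mathrm{Re}\,\sigma>0$ (a finite computation), the resolvent bound follows. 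You instead argue functional-analytically: first show $\gamma_0+i\tau$ lies in the resolvent set of $\mathcal{J}\mathcal{L}(jm_0k_0)$ by combining the Weyl essential-spectrum computation with the observation that all point eigenvalues have real part at most $\sigma_0<\gamma_0$; then extract a uniform bound on the compact set $|\tau|\le M$ by a contradiction/compactness argument; then bootstrap to higher $s$ by re-running the induction of Lemma \ref{tau large}, which, as you correctly note, uses $|\tau|$ large only in the base case $s=1$. Both approaches are standard and sound; the Evans-function route is more explicit and self-contained (it reduces the matter to a polynomial root check), while your route is more abstract and requires a somewhat delicate far-field step, which you correctly flag as the main obstacle. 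A cutoff argument of the type you sketch does close this gap once written out carefully (the commutators live in a transition annulus where the localized Rellich limit already controls them), so the plan is workable.

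One small inaccuracy worth fixing: you describe the limiting symbol $\frac{i\bigl[c\xi^4+(c-2\kappa)\xi^2+(jm_0k_0)^2\bigr]}{\xi(1+\xi^2)}$ as degenerating at $\xi\to 0$. In fact it blows up to $\pm i\infty$ there (the numerator tends to $(jm_0k_0)^2>0$ while the denominator vanishes). More to the point, the entire range of the symbol lies on the imaginary axis and hence is at distance exactly $\gamma_0$ from $\gamma_0+i\tau$ uniformly in $\xi$, so no special treatment of the $\xi\to0$ regime is needed beyond the zero-mean (domain-of-$\partial_x^{-1}$) constraint that is already built into the KP-type functional framework; this only makes your tail control easier than you feared, not harder.
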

Write $\sigma=\gamma_0+i\tau$ and impose $\LC 1-\partial_x^2\RC\partial_x$ on \eqref{resolvent equation}
\begin{align*}
\begin{split}
\sigma\LC 1-\partial_x^2\RC w_x=&-\partial_x^3\LC(c-\phi)w_x\RC+\partial_x^2\LC(\phi''-3\phi-2\kappa+c)w\RC\\
&-(jm_0k_0)^2w+\partial_x^2H.    
\end{split}
\end{align*}
Then
\begin{align*}
\begin{split}
(c-\phi)\partial_x^4w=\ &\LC 3\phi'+\sigma\RC\partial_x^3w+\LC3\phi''+\LC\phi''-3\phi-2\kappa+c\RC\RC\partial_x^2w\\
&+\LC\phi'''-\sigma+2\LC\phi''-3\phi-2\kappa+c\RC'\RC\partial_xw\\
&+\LC\LC\phi''-3\phi-2\kappa+c\RC''-(jm_0k_0)^2\RC w +\partial_x^2H .   
\end{split}
\end{align*}
Let $V=\LC w,\partial_xw,\partial_x^2w,\partial_x^3w\RC^{T}$. We have 
\begin{align*}
\frac{dV}{dx}=A(x,\sigma,j)V+\partial_x^2H.  
\end{align*}
Here 
\begin{align*}
A(x,\sigma,j)=\frac{1}{c-\phi} 
\LC\begin{matrix}
0 & c-\phi & 0 & 0 \\
0 & 0 & c-\phi & 0 \\
0 & 0 & 0 & c-\phi \\
A_{41} & A_{42}
& A_{43} & A_{44}
\end{matrix}\RC,
\end{align*}
where 
\begin{align*}
\begin{split}
    A_{41}&=\LC\phi''-3\phi-2\kappa+c\RC''-(jm_0k_0)^2,\\
    A_{42}&=\phi'''-\sigma+2\LC\phi''-3\phi-2\kappa+c\RC',\\
    A_{43}&=3\phi''+\LC\phi''-3\phi-2\kappa+c\RC,\\
    A_{44}&=3\phi'+\sigma,
\end{split}
\end{align*}
and the limiting matrix
\begin{align*}
A_{\infty}(\sigma,j)=\frac{1}{c}
\begin{pmatrix}
0 & c & 0 & 0 \\
0 & 0 & c & 0 \\
0 & 0 & 0 & c \\
-(jm_0k_0)^2 & -\sigma & -2\kappa+c & \sigma
\end{pmatrix}.
\end{align*}

The proof of Lemma \ref{tau small} is based on the following lemma:
\begin{lem}[\cite{Rousset-Tzvetkov JMPA 2008}]
Assume $\left|A(x,\sigma,j)-A_{\infty}(\sigma,j)\right|\leq Ce^{-\alpha|x|}$ and the spectrum of $A_{\infty}(\sigma,j)$ doesn't meet the imaginary axis for $\text{Re}(\sigma)>0$. Then 
\begin{align*}
|w|_s\leq C_{j,K,s}|H|_{s-1}.   
\end{align*}
\end{lem}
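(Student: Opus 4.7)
My plan is to reduce the estimate to an exponential-dichotomy problem for the first-order $4\times 4$ system
\begin{equation*}
\frac{dV}{dx} = A(x,\sigma,j) V + F, \qquad V=(w,w_x,w_{xx},w_{xxx})^T,
\end{equation*}
where $F=(0,0,0,\partial_x^2 H/(c-\phi))^T$. The spectral assumption on $A_\infty(\sigma,j)$ gives a splitting $\mathbb{C}^4=E^s_\infty\oplus E^u_\infty$ into the invariant subspaces corresponding to eigenvalues with negative (resp.\ positive) real part, and hence an exponential dichotomy with explicit rate $\mu>0$ for the autonomous problem $V'=A_\infty V$. Because $\sigma$ varies in the compact set $\{\gamma_0+i\tau : |\tau|\leq M\}$ and $j\in\{1,\dots,K\}$ is finite, continuity of the eigenvalues gives a uniform spectral gap, so the dichotomy rate $\mu$ and its constants are uniform.

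Next I would invoke the roughness theorem for exponential dichotomies (Coppel, Palmer): the exponential localization $|A(x,\sigma,j)-A_\infty(\sigma,j)|\leq Ce^{-\alpha|x|}$ is a small perturbation in the appropriate sense, so the full system $V'=A(x,\sigma,j)V$ still admits an exponential dichotomy on $\mathbb{R}$, with projectors $P^s(x),P^u(x)$ and evolution operator $\Phi(x,y)$ satisfying
\begin{equation*}
|\Phi(x,y)P^s(y)|\leq Ke^{-\mu(x-y)}\ (x\geq y),\qquad |\Phi(x,y)P^u(y)|\leq Ke^{-\mu(y-x)}\ (x\leq y).
\end{equation*}
The Green's function
\begin{equation*}
G(x,y) = \Phi(x,y)P^s(y)\mathbf{1}_{x>y} - \Phi(x,y)P^u(y)\mathbf{1}_{x<y}
\end{equation*}
then produces the unique bounded solution $V(x)=\int_{\mathbb{R}}G(x,y)F(y)\,dy$, and its off-diagonal exponential decay together with Young's inequality immediately gives $\|V\|_{L^2}\lesssim\|F\|_{L^2}$.

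A naive application of this bound yields $|w|_3\lesssim|\partial_x^2 H/(c-\phi)|_0\lesssim |H|_2$, which is one derivative too weak compared with the target $|w|_s\lesssim|H|_{s-1}$. To recover the claimed estimate I would integrate by parts in the representation formula, moving the two $\partial_y$'s from $H$ onto $G$: since $G$ is built from smooth solutions of the linear ODE, $\partial_y G$ and $\partial_y^2 G$ inherit the same exponential off-diagonal decay and remain bounded. This gives $|w|_1\lesssim|H|_0$ (and more generally $|w|_s\lesssim|H|_{s-1}$ for $s=1,2,3$). For $s\geq 4$, differentiating the ODE $k$ times and using the algebraic relation $\partial_x^4 w = (c-\phi)^{-1}[\cdots]$ bootstraps the estimate into $|w|_{3+k}\lesssim|w|_{3+k-1}+|H|_{2+k}$, which closes the induction.

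The principal difficulty lies in Step 2: obtaining the dichotomy projectors uniformly in $(\sigma,j)$, and ensuring that $A(x,\sigma,j)$ itself is regular (which requires the denominator $c-\phi$ to stay strictly positive; this follows from Proposition \ref{solitary wave property}, since $\phi$ peaks at $c-2\kappa<c$). A subtle second point is the uniqueness of the bounded solution picked out by the dichotomy: because $\mathrm{Re}\,\sigma=\gamma_0>0$, any homogeneous solution lying in the image of $P^u$ at $-\infty$ or $P^s$ at $+\infty$ would grow exponentially, so the Green's function formula indeed reproduces our $V$ and no spurious bounded homogeneous modes appear. Once these are addressed, the compactness in $\tau$ collapses all constants into a single $C_{j,K,s}$, completing the bound.
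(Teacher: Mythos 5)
The paper does not prove this lemma itself---it imports it from \cite{Rousset-Tzvetkov JMPA 2008}, Lemma 4.2 (with the remark that the statement is ``slightly different but essentially the same''). Your exponential-dichotomy plus roughness plus Green's-function plan is precisely the mechanism one expects from that reference, so the route is the right one, and your bootstrap for $s\geq 4$ and the uniformity-from-compactness remark for $\tau\in[-M,M]$ are both fine. But two steps in the write-up have genuine gaps.

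The integration-by-parts step is not justified as written. You assert that because $G$ is ``built from smooth solutions of the linear ODE,'' its $z$-derivatives ``remain bounded'' with the same off-diagonal decay. That is false for a generic matrix entry: the Green's matrix jumps by $-I$ across the diagonal $z=x$, so $\partial_z G$ contains a $-I\,\delta(z-x)$ term, and the absolutely continuous part of $\partial_z G$ itself jumps by $A(x)$ across the diagonal; two integrations by parts therefore generate $\delta$ and $\delta'$ contributions in general. What actually rescues the argument is the companion structure of $A$: because the forcing $F=(0,0,0,\partial_x^2H/(c-\phi))^T$ lives only in the fourth slot, the estimate for $|w|_1$ involves only the kernels $G_{14}$ and $G_{24}$, and these are continuous across $z=x$ (since $I_{14}=I_{24}=0$) with continuous first $z$-derivative (since $A_{14}=A_{24}=0$), so the two integrations by parts produce no singular terms on the components you need. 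This structural fact must be stated; without it the derivative count does not close.

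Your reason for uniqueness of the bounded solution---``because $\mathrm{Re}\,\sigma=\gamma_0>0$ any homogeneous solution lying in the image of $P^u$ at $-\infty$ or $P^s$ at $+\infty$ would grow exponentially''---is also incorrect. A nontrivial homogeneous solution lying in $\mathrm{Im}\,P^u$ near $-\infty$ and in $\mathrm{Im}\,P^s$ near $+\infty$ decays in \emph{both} directions; this is exactly what an eigenfunction of $\mathcal{J}\mathcal{L}(jm_0k_0)$ looks like, and that operator does possess unstable eigenvalues with positive real part. What rules out such a bounded kernel element, and hence makes the Evans function $D(\gamma_0+i\tau,j)$ nonvanishing, is the choice $\gamma_0>\sigma_0$ strictly above the maximal growth rate over all admissible transverse frequencies; positivity of $\mathrm{Re}\,\sigma$ alone does not suffice (indeed the blow-up of the resolvent near the eigenvalue $\sigma_0$ is the whole point of the construction).
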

\begin{remark}
The statement of the lemma is slightly different from \cite[Lemma 4.2]{Rousset-Tzvetkov JMPA 2008}, but it is essentially the same.
\end{remark}
Based on the above statement, to prove Lemma \ref{tau small}, it suffices to show that the spectrum of $A_{\infty}(\sigma,j)$ doesn't intersect the imaginary axis for $\text{Re}(\sigma)>0$.
\begin{proof}[Proof of Lemma \ref{tau small}]
The characteristic polynomial of $A_{\infty}(\sigma,j)$  can be written as 
\begin{align*}
c\lambda^4-\sigma\lambda^3-(c-2\kappa)\lambda^2+\sigma\lambda+(jm_0k_0)^2,
\end{align*}
which doesn't have imaginary root for all $j$.
\end{proof}
Now we are ready to show \eqref{uj estimate} and thus Theorem \ref{u estmate}.
\begin{proof}[Proof of Theorem \ref{u estmate}]
By Theorem \ref{resolvent estimate} and Bessel-Parseval identity, for $T>0$
\begin{align*}
\int_0^Te^{-2\gamma_0t}|u_j(t)|_s^2dt & \leq \int_0^{\infty}e^{-2\gamma_0t}|\tilde{u}_j(t)|_s^2dt
=\int_{\mathbb{R}}|w(\tau)|_s^2d\tau\\
&\leq C\int_{\mathbb{R}}|H(\tau)|_{s-1}^2d\tau
=\int_0^Te^{-2\gamma_0t}\left|F_j(t)\right|_{s-1}^2dt. 
\end{align*}
From \eqref{fj estimate} we have
\begin{align}\label{integrate in time}
\int_0^Te^{-2\gamma_0t}|u_j(t)|_s^2dt\leq C\int_0^Te^{2(\gamma-\gamma_0)t}dt\leq Ce^{2(\gamma-\gamma_0)T}.
\end{align}
From \eqref{linear problem Fourier}, by the similar argument as in \eqref{rs 1}-\eqref{rs 3}, we can obtain the following $H^s$ estimate 
\begin{align*}
\begin{split}
\frac{d}{dt}|u_j(t)|_s^2
&\leq C\LC|u_j|_{s}^2+|F_j(t)|_{s-1}^2\RC \leq C|u_j(t)|_{s}^2+Ce^{2\gamma t},   
\end{split}
\end{align*}
and then
\begin{align*}
\frac{d}{dt}\LC e^{-2\gamma_0t}|u_j(t)|_s^2\RC\leq C\LC e^{-2\gamma_0t}|u_j(t)|_{s}^2+e^{2(\gamma-\gamma_0)t}\RC.  
\end{align*}
Integrating the above in time and by \eqref{integrate in time}, it follows that 
\begin{align*}
|u_j(t)|_{s}^2\leq Ce^{2\gamma t},     
\end{align*}
which proves \eqref{uj estimate}.
\end{proof}
\subsection{Error estimate and final result}\label{error estimate and final result}
In this subsection, we will first give an error estimate and then prove Theorem \ref{main theorem}. 

Let $u^{\delta}$ be decomposed as 
\begin{align*}
u^{\delta}=Q+v^{ap}+w.
\end{align*}
From \eqref{nonlinear equation}, $w$ satisfies 
\begin{equation}\label{error equation}
\left\{\begin{aligned}
\partial_tw=\ &\mathcal{J}\mathcal{L}w+\mathcal{J}
\LC\frac{1}{2}w_x^2+v^{ap}_xw_x+v^{ap}_{xx}w+(w+v^{ap})w_{xx}-\frac{3}{2}w^2-3v^{ap}w\RC\\
&+G,\\
w|_{t=0}\ &=0,
\end{aligned}
\right.
\end{equation}
where
\begin{align*}
G=-\partial_tv^{ap}+\mathcal{J}\mathcal{L}v^{ap}+\mathcal{J}\LC\frac{1}{2}(v^{ap}_x)^2+v^{ap}v^{ap}_{xx}-\frac{3}{2}(v^{ap})^2\RC.   
\end{align*}
The existence of $w$ in \eqref{error equation} will be proved in the Appendix \ref{appendix pf}. By Proposition \ref{v estimate}, we have
\begin{align}\label{approximate estimate}
\|G\|_s\leq C_{M,s}\delta^{M+2}e^{(M+2)\text{Re}(\sigma_0)t}.
\end{align}

The following {\it a priori} estimate will be crucial for the proof of the instability result.
\begin{lem}\label{error prior estimate}
Let $w\in H^s(\mathbb{R}\times\mathbb{T}_a)$ satisfy \eqref{error equation}, then 
\begin{align}\label{error estimate equation}
\frac{d}{dt}\|w\|_{s}^2\leq C_1\LC C_{2,M,s}+\|w\|_s\RC\|w\|_s^2+ C_{3,M,s}\delta^{2(M+2)}e^{2(M+2)\text{Re}(\sigma_0)t}.
\end{align}
\end{lem}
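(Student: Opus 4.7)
The plan is to perform a standard $H^s$ energy estimate directly on \eqref{error equation}: differentiate $\|w\|_s^2$ in $t$, substitute $\partial_t w$ from \eqref{error equation}, and bound each contribution on the right-hand side. I would group the contributions by their structural role---the linear part $\mathcal{J}\mathcal{L}w$, the semilinear cross-terms involving both $w$ and $v^{ap}$, the purely quadratic-in-$w$ part, and the forcing $G$---and bound each in a way that mirrors the resolvent analysis of Lemma \ref{tau large}.

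For the linear part $\mathcal{J}\mathcal{L}w$ I would use the decomposition behind \eqref{high order equation} together with the key identity \eqref{important transformation}. This peels off a single principal-order transport piece $(c-\phi)w_x$, leaving everything else smoothed by a factor of $(1-\partial_x^2)^{-1}$. Applying $\partial_x^s$ to the transport piece and pairing with $\partial_x^s w$ in $L^2(\mathbb{R}\times\mathbb{T}_a)$, integration by parts converts $\LA (c-\phi)\partial_x^{s+1}w,\partial_x^s w\RA$ into a harmless $\tfrac{1}{2}\int \phi'(\partial_x^s w)^2\,dx\,dy$, while Kato--Ponce commutator bounds absorb the remainder $[\partial_x^s,c-\phi]w_x$ into $C\|w\|_s^2$. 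The smoothed pieces, including the nonlocal transverse term $(1-\partial_x^2)^{-1}\partial_x^{-1}\partial_y^2 w$ (handled by integrating one $\partial_y$ by parts), are strictly lower order and yield the same type of bound.

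The semilinear cross-terms $\mathcal{J}[v_x^{ap}w_x + v_{xx}^{ap}w + v^{ap}w_{xx} - 3v^{ap}w]$ then follow by viewing $v^{ap}$ as a smooth, exponentially bounded coefficient via Proposition \ref{v estimate}, producing $C_{2,M,s}\|w\|_s^2$. The quadratic-in-$w$ terms $\mathcal{J}[\tfrac{1}{2}w_x^2 + ww_{xx} - \tfrac{3}{2}w^2]$ require the crucial cancellation: using $ww_{xx}=(ww_x)_x - w_x^2$ together with the identity \eqref{important transformation} one obtains
\begin{align*}
\mathcal{J}[ww_{xx}] = -ww_x + (1-\partial_x^2)^{-1}[ww_x] - \mathcal{J}[w_x^2],
\end{align*}
so that no resulting term demands more than one derivative on $w$ beyond the base regularity. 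Moser and Kato--Ponce estimates then give the cubic bound $C\|w\|_s^3 = C\|w\|_s\cdot \|w\|_s^2$ that matches the $C_1\|w\|_s\cdot\|w\|_s^2$ summand in \eqref{error estimate equation}.

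For the forcing $G$, Cauchy--Schwarz yields $|\LA G,w\RA_{H^s}|\leq \varepsilon\|w\|_s^2 + C_\varepsilon\|G\|_s^2$; choosing $\varepsilon$ small absorbs the first term into the preceding estimates, and \eqref{approximate estimate} controls $\|G\|_s^2$ by $C_{M,s}^2\delta^{2(M+2)}e^{2(M+2)\text{Re}(\sigma_0)t}$, producing the second summand in \eqref{error estimate equation}. The main obstacle is genuinely the coexistence of the variable-coefficient transport $(c-\phi)w_x$ with the quasilinear $ww_{xx}$: neither admits a naive $H^s$ energy estimate on its own, but the smoothing of $\mathcal{J}$ together with the algebraic identity $ww_{xx}=(ww_x)_x-w_x^2$ reduces both to controllable commutators and lower-order remainders, realizing the ``smoothing plus cancellation'' mechanism flagged in the introduction.
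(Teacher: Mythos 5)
Your proposal is correct and matches the paper's proof in all its essential steps: applying $s$ derivatives, peeling the principal transport $(c-\phi)w_x$ off $\mathcal{J}\mathcal{L}w$ via \eqref{important transformation} and absorbing the commutator remainder into $C\|w\|_s^2$; exploiting the cancellation $ww_{xx}=(ww_x)_x - w_x^2$ (respectively $(w+v^{ap})w_{xx}$) so that $\mathcal{J}$ converts the quasilinear term into a transport piece plus smoothed remainders; and closing with Cauchy--Schwarz on $G$ using \eqref{approximate estimate}. The only cosmetic differences are that the paper keeps $w$ and $v^{ap}$ grouped together (working with $(w+v^{ap})w_x$ directly in \eqref{ee 2}--\eqref{ee 3}) while you separate the cross-terms from the purely quadratic ones, and that the paper handles the residual $\mathcal{J}[w_x^2]$ term by an extra integration by parts in \eqref{ee 4} where you invoke Moser/Kato--Ponce and the one-derivative smoothing of $\mathcal{J}$ directly — both routes give the same $C\|w\|_s^3$ bound. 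Do note that in two dimensions one must sum over all mixed derivatives $\partial_x^\alpha\partial_y^\beta$ with $\alpha+\beta=s$, not just $\partial_x^s$; this does not change anything substantive since the dangerous top-order terms are all in $x$, and the transverse $(1-\partial_x^2)^{-1}\partial_x^{-1}\partial_y^2$ contribution is anti-self-adjoint and contributes nothing to $\tfrac{d}{dt}\|w\|_s^2$.
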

\begin{proof}
In this proof, we denote $\partial^k$ as the derivative with order $k$, $O(\partial^kw)$ as the polynomial differential operator on $w$ with highest order $k$, and $\LA \cdot,\cdot\RA $ as the inner product in $L^2(\mathbb{R}\times\mathbb{T}_a)$. It suffices to consider the estimate for the highest order $s$. Apply $\partial_x^{\alpha}\partial_y^{\beta}$ on \eqref{error equation} where $\alpha+\beta=s$ and take inner product with $\partial_x^{\alpha}\partial_y^{\beta}w$. Here we choose $s\geq 2$.

For the first term on the right-hand side of \eqref{error equation}, by \eqref{important transformation}
\begin{align}
&\LA \partial_x^{\alpha}\partial_y^{\beta}\mathcal{J}\mathcal{L}w,\partial_x^{\alpha}\partial_y^{\beta}w\RA \label{ee 1}\\
&\quad=\ \LA \partial_x^{\alpha}\partial_y^{\beta}\LC(c-\phi)w_x\RC+\mathcal{J}O(\partial_x^{\alpha}\partial_y^{\beta}w),\partial_x^{\alpha}\partial_y^{\beta}w\RA  \notag\\
&\quad=\ \LA  (c-\phi)\partial_x^{\alpha+1}\partial_y^{\beta}w+O(\partial_x^{\alpha}\partial_y^{\beta}w)+\mathcal{J}O(\partial_x^{\alpha}\partial_y^{\beta}w),\partial_x^{\alpha}\partial_y^{\beta}w\RA  \notag\\
&\quad=\ \LA  \frac{1}{2}\phi'\partial_x^{\alpha}\partial_y^{\beta}w+O(\partial_x^{\alpha}\partial_y^{\beta}w)+\mathcal{J}O(\partial_x^{\alpha}\partial_y^{\beta}w),\partial_x^{\alpha}\partial_y^{\beta}w\RA  \notag\\
&\quad\leq\ C\|w\|_s^2 \notag
\end{align}    
since $\mathcal{J}$ is bounded on $H^s(\mathbb{R}\times\mathbb{T}_a)$. 

For the second term on the right-hand side of \eqref{error equation}, rewrite it as 
\begin{align}
&\mathcal{J}\LC\frac{1}{2}w_x^2+v^{ap}_xw_x+v^{ap}_{xx}w+(w+v^{ap})w_{xx}-\frac{3}{2}w^2-3v^{ap}w\RC \label{ee 2}\\
&\quad=\ \LC 1-\partial_x^2\RC^{-1}\partial_x\LC-\frac{1}{2}w_x^2+v^{ap}_{xx}w+\LC(w+v^{ap})w_x\RC_x-\frac{3}{2}w^2-3v^{ap}w\RC \notag\\
&\quad=\ -\LC w+v^{ap}\RC w_x+\LC 1-\partial_x^2\RC^{-1}\partial_x\LC-\frac{1}{2}w_x^2+v^{ap}_{xx}w-\frac{3}{2}w^2-3v^{ap}w\RC, \notag
\end{align}
then
\begin{align}
&\LA \partial_x^{\alpha}\partial_y^{\beta}\LC (w+v^{ap})w_x\RC,\partial_x^{\alpha}\partial_y^{\beta}w\RA  \label{ee 3}\\
&\quad\leq\ \LA  (w+v^{ap})\partial_x^{\alpha+1}\partial_y^{\beta}w+w_x\partial_x^{\alpha}\partial_y^{\beta}(w+v^{ap}) \right. \notag\\
&\quad\quad +s\sum_{j+k=s-1}\partial^1(w+v^{ap})\partial_x^{j+1}\partial_y^kw 
+O(\partial^{[\frac{s}{2}]+1}(w+v^{ap}))O(\partial^{s-1}w),\partial_x^{\alpha}\partial_y^{\beta}w\Big>  \notag\\
&\quad\leq\ \LA  -\frac{1}{2}(w_x+v^{ap}_x)\partial_x^{\alpha}\partial_y^{\beta}w+w_x\partial_x^{\alpha}\partial_y^{\beta}(w+v^{ap}) \right.\notag\\
&\quad\quad \left. +s\sum_{j+k=s-1}\partial^1(w+v^{ap})\partial_x^{j+1}\partial_y^kw 
+O(\partial^{[\frac{s}{2}]+1}(w+v^{ap}))O(\partial^{s-1}w),\partial_x^{\alpha}\partial_y^{\beta}w \RA  \notag\\
&\quad\leq\ C\left\|O(\partial^{[\frac{s}{2}]+1}(w+v^{ap}))\right\|_{L^{\infty}}\|w\|_s^2 \ \leq\ C_1(C_{2,M,s}+\|w\|_s)\|w\|_s^2, \notag
\end{align}
and
\begin{align}
&-\frac{1}{2}\LA  \partial_x^{\alpha}\partial_y^{\beta}\LC\LC1-\partial_x^2\RC^{-1}\partial_x w_x^2\RC,\partial_x^{\alpha}\partial_y^{\beta}w\RA = \frac{1}{2}\LA  \partial_x^{\alpha}\partial_y^{\beta}w_x^2,\LC1-\partial_x^2\RC^{-1}\partial_x\partial_x^{\alpha}\partial_y^{\beta}w\RA  \label{ee 4} \\
&\quad\leq\ \Big<  w_x\partial_x^{\alpha+1}\partial_y^{\beta}w+2s\sum_{i+j=s-1}\partial^2w\partial_x^{j+1}\partial_y^{k}w \notag\\
&\quad\qquad \left. +O(\partial^{[\frac{s}{2}]+1}w) O(\partial^{s-1}w)),\LC 1-\partial_x^2\RC^{-1}\partial_x\partial_x^{\alpha}\partial_y^{\beta}w\RA  \notag\\
&\quad\leq\ \LA  w_x\partial_x^{\alpha+1}\partial_y^{\beta}w,\LC1-\partial_x^2\RC^{-1}\partial_x\partial_x^{\alpha}\partial_y^{\beta}w\RA +C\left\|O(\partial^{[\frac{s}{2}]+1}w)\right\|_{L^{\infty}}\|w\|_s^2 \notag\\
&\quad=\ -\LA  \partial_x^{\alpha}\partial_y^{\beta}w,w_x\LC1-\partial_x^2\RC^{-1}\partial_x^2\partial_x^{\alpha}\partial_y^{\beta}w+w_{xx}\LC1-\partial_x^2\RC^{-1}\partial_x\partial_x^{\alpha}\partial_y^{\beta}w\RA  \notag\\ 
&\quad\qquad+C\left\|O(\partial^{[\frac{s}{2}]+1}w)\right\|_{L^{\infty}}\|w\|_s^2 \notag\\
&\quad\leq\ -\LA  \partial_x^{\alpha}\partial_y^{\beta}w,w_x\LC1-\partial_x^2\RC^{-1}\LC\partial_x^2-1+1\RC\partial_x^{\alpha}\partial_y^{\beta}w\RA +C\left\|O(\partial^{[\frac{s}{2}]+1}w)\right\|_{L^{\infty}}\|w\|_s^2 \notag\\
&\quad\leq\ C\left\|O(\partial^{[\frac{s}{2}]+1}w)\right\|_{L^{\infty}}\|w\|_s^2 \ \leq\ C_1\LC C_{2,M,s}+\|w\|_s\RC\|w\|_s^2, \notag
\end{align}
and
\begin{align}
&\LA \partial_x^{\alpha}\partial_y^{\beta}\LC\LC1-\partial_x^2\RC^{-1}\partial_x\LC v^{ap}_{xx}w-\frac{3}{2}w^2-3v^{ap}w\RC\RC, \partial_x^{\alpha}\partial_y^{\beta}w\RA  \label{ee 5}\\
&\quad=\ \LA  \LC1-\partial_x^2\RC^{-1}\partial_x\partial_x^{\alpha}\partial_y^{\beta}\LC v^{ap}_{xx}w-\frac{3}{2}w^2-3v^{ap}w\RC,\partial_x^{\alpha}\partial_y^{\beta}w\RA  \notag\\
&\quad\leq\ C\left\|\partial_x^{\alpha}\partial_y^{\beta}\LC v^{ap}_{xx}w-\frac{3}{2}w^2-3v^{ap}w\RC\right\|_0\|w\|_s \notag\\
&\quad\leq\ C\left\|O(\partial^{[\frac{s}{2}]+1}w)\right\|_{L^{\infty}}\|w\|_s^2 \ \leq\ C_1\LC C_{2,M,s}+\|w\|_s\RC\|w\|_s^2. \notag
\end{align}
So by \eqref{approximate estimate}, \eqref{ee 1}-\eqref{ee 5}, the estimate \eqref{error estimate equation} is obtained.
\end{proof}
Now we give an error estimate. Let 
\begin{align*}
T^{\delta}=\frac{\log(\theta/\delta)}{\sigma_0},
\end{align*}
where $\theta$ will be chosen later.
Define $T^*$ such that 
\begin{align*}
T^*=\text{sup}\{T:T\leq T^{\delta} \text{ such that for any}\ t\in [0,T], \|w\|_s\leq 1\}.
\end{align*}
Then by Lemma \ref{error prior estimate}, when $0\leq t\leq T^*$,
\begin{align*}
\frac{d}{dt}\|w\|_{s}^2\leq C_1C_{2,M,s}\|w\|_s^2+ C_{3,M,s}\delta^{2(M+2)}e^{2(M+2)\text{Re}(\sigma_0)t}.  
\end{align*}
Note that $C_{2,M,s}$ is only related to $v^{ap}$. We can rewrite $C_{2,M,s}$ as $\theta C_{2,M,s}$ such that  the new $C_{2,M,s}$ is dependent on $s$ and $M$ but independent of $\theta$ and $t$. Then we have 
\begin{align*}
\frac{d}{dt}\|w\|_{s}^2\leq (C_1+\theta C_{2,M,s})\|w\|_s^2+ C_{3,M,s}\delta^{2(M+2)}e^{2(M+2)\text{Re}(\sigma_0)t}.     
\end{align*}
We can choose $M$ large enough and $\theta$ small enough such that 
\begin{align*}
    2(M+2)-C_1-\theta C_{2,M,s}>1.
\end{align*}
From now on, we fix $M$. Then by Gronwall's inequality we have 
\begin{align*}
\sup_{0\leq t\leq T^*}\|w\|_s\leq C_{M,s}\theta^{M+2}.
\end{align*}
When $\theta$ is sufficiently small, by the definition of $T^*$, we actually have $T^*=T^{\delta}$, i.e.
\begin{align*}
\sup_{0\leq t\leq T^{\delta}}\|w\|_s\leq C_{M,s}\theta^{M+2}
\end{align*}
for $s\geq 2$. 
In particular, we have 
\begin{align}\label{error estimate}
\left\|w(T^{\delta},\cdot)\right\|_0\leq C_{M,s}\theta^{M+2}.    
\end{align}

Now we are in the position to prove Theorem \ref{main theorem}.
\begin{proof}[Proof of Theorem \ref{main theorem}]
Denote $\Pi$ the projection onto the zero mode in $y$, i.e.
\begin{align*}
\Pi \LC u(x,y)\RC = u(x,y)-\int_0^{\frac{2\pi}{k_0}}u(x,y)dy,    
\end{align*}
then 
\begin{align*}
\begin{split}
    \left\|\Pi (v^{ap})\right\|_0 &\geq \delta\left\|\Pi (v_0)\right\|_0-\sum_{k=1}^{M}\delta^{k+1}\left\|\Pi (v^k)\right\|_0 \\
    &=\delta\|v_0\|_0-\sum_{k=1}^{M}\delta^{k+1}\left\|\Pi (v^k)\right\|_0\\
    &\geq c_s\delta e^{\sigma_0t}-\sum_{k=1}^MC_{k,s}\delta^{k+1}e^{(k+1)\sigma_0t}.
\end{split}
\end{align*}
When $\theta$ is sufficiently small, we have 
\begin{align}\label{projection estimate}
    \left\|\Pi (v^{ap}(T^{\delta},\cdot))\right\|_0\geq \frac{c_s\theta}{2}.
\end{align}
Then by \eqref{error estimate} and \eqref{projection estimate}, for any $l\in \mathbb{R}$,
\begin{align*}
    \begin{split}
        \left\|u^{\delta}(T^{\delta},\cdot)-\phi(\cdot-l)\right\|_0
        &\geq \left\|\Pi \left(u^{\delta}(T^{\delta},\cdot)-\phi(\cdot-l)\right)\right\|_0\\
        &=\left\|\Pi \left(u^{\delta}(T^{\delta},\cdot)-\phi(\cdot)\right)\right\|_0\\
        &=\left\|\Pi\left(v^{ap}(T^{\delta},\cdot)+w(T^{\delta},\cdot)\right)\right\|_0\\
        &\geq \frac{c_s\theta}{2}-\left\|\Pi\left(w(T^{\delta},\cdot)\right)\right\|_0\\
        &\geq \frac{c_s\theta}{2}-\left\|w(T^{\delta},\cdot)\right\|_0\\
        &\geq \frac{c_s\theta}{2}-C_{M,s}\theta^{M+2},
    \end{split}
\end{align*}
when $\theta$ is chosen appropriately, the estimate will be bounded below by a fixed $\eta$ depending only on $s$, which proves the theorem .
\end{proof}

\section*{Acknowledgements}
R. M. Chen and J. Jin are supported in part by the NSF grants DMS-1907584.

\appendix
\section{Proofs}
\subsection{Proof of Lemma \ref{well posedness u_j}}\label{appendix lem}
\begin{proof}[Proof of Lemma \ref{well posedness u_j}]
By Duhamel's principle, it suffices to prove the existence of solution for the homogeneous equation:
\begin{align*}
\partial_tu=\mathcal{J}\mathcal{L}(jm_0k_0)u,\ \  u|_{t=0}=\tilde{u}\ \ \ \text{for}\ j=1\cdots, K.   
\end{align*}
Since
\begin{align}
\mathcal{J}\mathcal{L}(jm_0k_0)=\ &\LC1-\partial_x^2\RC^{-1}\LC-\partial_x^2\RC\LC(c-\phi)u_x\RC \label{JL decomposition}\\
&+\LC1-\partial_x^2\RC^{-1}\partial_x\LC\LC\phi''-3\phi-2\kappa+c\RC u\RC
-(jm_0k_0)^2\LC1-\partial_x^2\RC^{-1}\partial_x^{-1}u \notag\\
=\ &(c-\phi)u_x-\LC1-\partial_x^2\RC^{-1}\LC(c-\phi)u_x\RC \notag\\
&+\LC1-\partial_x^2\RC^{-1}\partial_x\LC\LC\phi''-3\phi-2\kappa+c\RC u\RC
-(jm_0k_0)^2\LC1-\partial_x^2\RC^{-1}\partial_x^{-1}u,  \notag 
\end{align}
it suffices to study the operator
\begin{align*}
\mathcal{A}=(c-\phi)\partial_x-(jm_0k_0)^2\LC1-\partial_x^2\RC^{-1}\partial_x^{-1}   
\end{align*}
since other terms are just bounded perturbations.

Consider $\mathcal{A}: H^{s+1}(\mathbb{R}) \cap   \mathcal{D}\LC\partial_x^{-1}(\mathbb{R})\RC\rightarrow H^s(\mathbb{R})$, where $\mathcal{D}(\partial_x^{-1}(\mathbb{R})) = \mathcal{F}^{-1}\left\{u:\hat{u}(0)=0\right\}$ and $\mathcal{F}$ is the Fourier transform with respect to $x$. We first prove that 
\begin{align}\label{dissipation estimate}
\LA  \mathcal{A}u,u\RA _{H^s} \leq \omega\LA  u,u\RA _{H^s}    
\end{align}
for some $\omega>0$. For $(c-\phi)\partial_x$,
\begin{align*}
\begin{split}
\LA  (c-\phi)u_x,u\RA _{H^s}&=\LA  \partial_x^s((c-\phi)u_x),\partial_x^s u\RA +\sum_{\alpha=0}^{s-1} \LA  \partial_x^{\alpha}((c-\phi)u_x),\partial_x^{\alpha} u\RA \\
&\leq \LA  \partial_x^s((c-\phi)u_x),\partial_x^s u\RA  + \omega_1\LA  u,u\RA _{H^s}.
\end{split}
\end{align*}
It reduces to control order $s$ term, and we have
\begin{align*}
\begin{split}
\LA  \partial_x^s((c-\phi)u_x),\partial_x^s u\RA  &= 
\LA  (c-\phi)\partial_x^{s+1}u+O(\partial_x^su),\partial_x^su\RA \\
&=\LA  \frac{1}{2}\phi'\partial_x^su+O(\partial_x^su),\partial_x^su\RA \\
&\ \leq\omega_2\LA  u,u\RA _{H^s}.    
\end{split}
\end{align*}
For $\LC1-\partial_x^2\RC^{-1}\partial_x^{-1}$,
\begin{align*}
\begin{split}
\LA \LC1-\partial_x^2\RC^{-1}\partial_x^{-1}u,u\RA _{H^s}&= \LA  \LC1-\partial_x^2\RC^{-1}\partial_x^{-1}u,u\RA +\sum_{\alpha=1}^s\LA  \partial_x^{\alpha}\LC\LC1-\partial_x^2\RC^{-1}\partial_x^{-1}u\RC,\partial_x^{\alpha}u\RA \\
&\leq \LA  \LC1-\partial_x^2\RC^{-1}\partial_x^{-1}u,u\RA +\omega_3\LA  u,u\RA _{H^s}\\
&= 0+\omega_3\LA  u,u\RA _{H^s}.
\end{split}
\end{align*}

Next we prove that $\lambda-(\mathcal{A}-\omega)$ is surjective for $\lambda>0$. Since by \eqref{dissipation estimate}, there is no point spectrum larger than $0$. It suffices to prove that $\lambda > 0$ is not in the essential spectrum of $\mathcal{A}-\omega$. It is enough just to consider the essential spectrum of its limiting operator 
\begin{align*}
c\partial_x-(jm_0k_0)^2(1-\partial_x^2)^{-1}\partial_x^{-1}-\omega.
\end{align*}
By using Fourier transform it is clear that $\lambda > 0$ is not in the essential spectrum of the above operator.
Based on all the above, by Lumer-Phillips theorem \cite{Engel Nagel 2006}, the lemma is concluded.
\end{proof}
\subsection{Existence of solution in \eqref{error equation}}\label{appendix pf}
\begin{proof}
The proof follows the idea of \cite{Constantin Escher 1998, Molinet 2004}.
Consider the regularized problem
\begin{equation}\label{regularized equation}
\left\{\begin{aligned}
\partial_tw^{\varepsilon}=\ &\mathcal{J}^{\varepsilon}
\LC\frac{1}{2}(w_x^{\varepsilon})^2+v^{ap}_xw_x^{\varepsilon}+v^{ap}_{xx}w^{\varepsilon}+(w^{\varepsilon}+v^{ap})w_{xx}^{\varepsilon}-\frac{3}{2}(w^{\varepsilon})^2-3v^{ap}w^{\varepsilon}\RC\\
&+\mathcal{J}^{\varepsilon}\mathcal{L}w^{\varepsilon}+G^{\varepsilon}, \\
w|_{t=0}\ &=0,
\end{aligned}
\right.
\end{equation}
where 
\begin{align*}
\mathcal{J}^{\varepsilon}=\LC 1-\partial_x^2+\varepsilon\Delta^2\RC^{-1}\partial_x,\ \Delta=\partial_x^2+\partial_y^2.  
\end{align*}
It can be derived from fixed point argument that the solution $w_{\varepsilon}$ exists. Indeed, since $\mathcal{J}^{\varepsilon}$ maps $H^{s}\rightarrow H^{s+3}$, it suffices to choose a unit ball in $C([0,t^{\varepsilon}])$ for the contraction mapping. Next we state the approximating procedure.
We could have the same estimate as \eqref{error estimate equation} for $w^{\varepsilon}$:
\begin{align*}
\frac{d}{dt}\|w^{\varepsilon}\|_{s}^2\leq C_1\LC C_{2,M,s}+\|w^{\varepsilon}\|_s\RC\|w^{\varepsilon}\|_s^2+ C_{3,M,s}\delta^{2(M+2)}e^{2(M+2)\text{Re}(\sigma_0)t}.    
\end{align*}
Then for each $\varepsilon$, we define $T^{\varepsilon}$
\begin{align*}
T^{\varepsilon}=\sup\{T:\|w\|_s\leq C_{2,M,s}\ \text{for}\ 0\leq t\leq T\}.
\end{align*}
Then for each $\varepsilon$ such that  $T^{\varepsilon}<1$, we have\\
\begin{align*}
\frac{d}{dt}\left\|w^{\varepsilon}\right\|_{s}^2\leq 2C_1\left\|w^{\varepsilon}\right\|_s\left\|w^{\varepsilon}\right\|_s^2+ C_{3,M,s}\delta^{2(M+2)}e^{2(M+2)\text{Re}(\sigma_0)t} ,  
\end{align*}
which yields
\begin{align*}
\begin{split}
\|w^{\varepsilon}\|_s^2\leq &\bigg(\frac{1}{\sqrt{C_{2,M,s}}}-2C_1(t-T^{\varepsilon})\bigg)^{-2}\\
&+\int_{T^{\varepsilon}}^t\bigg(\frac{1}{\sqrt{C_{3,M,s}\delta^{2(M+2)}e^{2(M+2)\text{Re}(\sigma_0)s}}}-2C_1(t-s)\bigg)^{-2}ds.
\end{split}
\end{align*}
Since $T^{\varepsilon}<1$, for $t$ sufficiently close to $T^{\varepsilon}$, 
\begin{align*}
\frac{1}{\sqrt{C_{3,M,s}\delta^{2(M+2)}e^{2(M+2)\text{Re}(\sigma_0)s}}}-2C_1(t-s)>c>0
\end{align*}
for all $\varepsilon$ such that  $T^{\varepsilon}<1$. So there exists $T$ such that  $\|w^{\varepsilon}\|_s^2$ is uniformly bounded on $[0,T]$ for all $\varepsilon$ when $T^{\varepsilon}<1$. Then for all $\varepsilon$, $\|w^{\varepsilon}\|_s^2$ is uniformly bounded on $[0,\tilde{T}]$ where $\tilde{T}=\min (T,1)$. And from \eqref{regularized equation}, $\{\partial_tw^{\varepsilon}\}$ is uniformly bounded in $L^{\infty}([0,\tilde{T}];L^2_{\mathbb{R}\times \mathbb{T}_a})$. Then by Aubin-Lions lemma, we obtain a solution $u\in L^{\infty}([0,\tilde{T}],H^s_{\mathbb{R}\times \mathbb{T}_a})$ for \eqref{error equation}.
\end{proof}

\end{document}